\numberwithin{equation}{section}
\DeclareMathOperator{\prob}{{\mathbb{P}}}
\DeclareMathOperator{\expec}{{\mathbb{E}}}
\DeclareMathOperator{\e}{{e}}
\newcommand{\dif}{\mathrm{d}}
\newcommand{\vep}{\varepsilon}
\newcommand{\B}{C}
\newcommand{\W}{\texttt{W}}
\newcommand{\T}{\mathcal{T}}
\newcommand{\M}{\mathcal{M}}
\renewcommand{\H}{\mathcal{H}}
\newcommand{\D}{\mathcal{D}}
\newcommand{\groot}{\varnothing}
\DeclareMathOperator{\Unif}{\texttt{Unif}}
\DeclareMathOperator{\Binom}{\texttt{Binom}}
\newcommand{\sss}{\scriptscriptstyle}
\newtheorem{theorem}{Theorem}[section]
\newtheorem{lemma}[theorem]{Lemma}
\newtheorem{example}[theorem]{Example}
\newtheorem{remark}[theorem]{Remark}
\newtheorem{corollary}[theorem]{Corollary}
\definecolor{halfgray}{gray}{0.55} 
\definecolor{purplish}{rgb}{0.41, 0.41, 0.64}
\definecolor{navy}{rgb}{0,0,0.52}
\definecolor{webbrown}{rgb}{.6,0,0}
\definecolor{Maroon}{cmyk}{0, 0.87, 0.68, 0.32}
\definecolor{NiceRed}{rgb}{0.41,0, 0}
\definecolor{Black}{cmyk}{0, 0, 0, 0}
\definecolor{Peacefulbackground}{rgb}{0.933333,0.9098,0.9098}
\definecolor{Poop}{RGB}{165,82,35}
\definecolor{darkgreen}{RGB}{50,205,50}
\begin{document}

\title{Scaling of the Cumulative Weights of the Invasion Percolation Cluster on a Branching Process Tree}
\author{Rowel G\"undlach}
\date{\today}
\affil{Eindhoven University of Technology}
\maketitle

\begin{abstract}
    We analyse the scaling of the weights added by  \textit{invasion percolation} on a branching process tree. This process is a paradigm model of {\em self-organised criticality}, where criticality is approach without a prespecified  parameter. In this paper, we are interested in the \textit{invasion percolation cluster}, obtained by performing invasion percolation for $n$ steps and letting $n\to\infty$. The volume scaling of the IPC was discussed in detail in \cite{Gundlach2022} and in this work, we extend this analysis to the scaling of the cumulative weights of the IPC.

    We assume a power-law offspring distribution on the branching process tree with exponent $\alpha$. In the regimes $\alpha>2$ and $\alpha\in(1,2)$, we observe a natural law-of-large-numbers result, where the cumulative weights have the same scaling as the volume, but converge to a different limit. In the case $\alpha<1$, where the weights added by invasion percolation vanish, the scaling regimes change significantly. For $\alpha\in(1/2,1)$, the weights scale exponentially but with a different parameter than the volume scaling, while for $\alpha\in(0,1/2)$ it turns out that the weights are summable without any scaling. Such a phase transition at $\alpha=1/2$ of the cumulative weights is novel and unexpected as there is no significant change in the neighbourhood scaling of the IPC at $\alpha=1/2$.
\end{abstract}

\section{Introduction}
In this paper we discuss the scaling of the cumulative weights added by \textit{invasion percolation} \cite{Wilkinson1983} on a weighted \textit{branching process tree} (BP tree) \cite[Chapter 8]{Karlin1975}.
Invasion percolation is a dynamic process, starting from the root, that creates a weighted sub-tree by every step adding the edge with the lowest weight incident to the the current sub-tree. By performing invasion percolation for an infinite number of steps, we obtain the sub-tree called the \textit{invasion percolation cluster} (IPC). A key feature of the IPC is the self-organised criticality, meaning that the limsup of the added weights converges to a fixed value without tuning specific parameters.
Such models find practical use in the physics literature where invasion percolation represents the (capillary) displacements of fluids or gasses in a porous medium \cite{Chandler1982, Laidlaw1993,Norris2014}.

The main interest of this paper is the scaling of the size of the IPC, which has been discussed before \cite{Addario-Berry2012,Angel2008, Gundlach2022} for different types of tree-like objects.
However, when discussing the \textit{size} of weighted graph objects, there are typically two points of view: On the one hand,
one can express size in number of vertices, also called the \textit{volume scaling}, for example via the number of vertices in the IPC at a certain distance from the root.
On the other hand, one can express size in weights, for example via the cumulative weights in the IPC at a certain distance from the root.
Based on the applications both have significant merit. Knowing the scaling of the number of vertices in a certain radius gives particular insight on how fluids displace through the medium, for example on how far and how wide the cluster is.
On the other hand, the assigned weights to edges can contain valuable information, such as distance, time or cost of traversing a certain edge. In such cases knowing the scaling of the cumulative weights adds an additional insight on the total time or cost of the IPC up to a certain level.
This becomes more relevant as recent studies find social-economic applications for the invasion percolation model \cite{Gabrielli2007,Gabrielli2009}. For a comprehensive overview of applications in invasion percolation, we refer to \cite[Chapter 16]{Sahimi2023}

In \cite{Gundlach2022}, the geometry and volume scaling of the IPC on the BP tree were discussed in detail. A noticeable feature of the IPC is that it is \textit{one-ended} \cite[Corollary 2.3]{Michelen2019}, meaning that it contains a unique path to infinity, also called the \textit{backbone}. In \cite{Gundlach2022}, the volume scaling of the \textit{$k$-cut IPC} was analysed, which denotes the cluster when the edge between the $k$-th and $(k+1)$-th backbone vertex is cut.
It turns out that the specific scaling differs
for different power-law offspring distributions of the branching process tree.
In this work, we
cover the natural extension of the aforementioned work to the scaling of the cumulative weights, see Theorem \ref{thm:main}, in the annealed setting. We also provide specific conditions on the offspring distribution such that the weights in the IPC are summable (without any scaling involved). 

\paragraph{Notation.}
We use the standard notation $\xrightarrow{d}$, $\xrightarrow{\prob}$ and $\xrightarrow{a.s.}$ to indicate convergence in distribution and probability and almost sure convergence respectively. 
Moreover, we denote $\xrightarrow{\D}$ for $J_1$ convergence in the Skorohod space of c\`adl\`ag paths on $D[\varepsilon,\infty)$ (c.f. \cite[Chapter 12-13]{Billingsley1999}).
We use the Bachmann-Landau notation $O(\cdot)$ and $o(\cdot)$ for asymptotic behaviour and write $o_\prob(1)$ for a random variable that vanishes in distribution arbitrarily slowly. Finally, we use $\preceq$ to indicate stochastic domination of random variables.

\paragraph{Organisation of this section.}
We discuss the details of our main contributions in Section \ref{sec:main_contr}. Here we also provide some important notation and properties of the IPC. Then, in Section \ref{sec:intuition}, we provide intuition on the main results. We then close this section with an overview of the contents of the rest of this paper.

\subsection{Main contributions}
\label{sec:main_contr}
Consider an i.i.d. branching process (BP) tree $\T=(V,E)$ with offspring distribution $X$, where $X\geq1$ with probability 1, so that survival of the tree is ensured. We assume a power-law offspring distribution defined by
\begin{equation}
\label{eq:heavy_tail}
    1-F(x) = c_{\sss X}x^{-\alpha}\big(1+o(1)\big),
\end{equation}
for $\alpha>0$ and normalisation constant $c_{\sss X}$.
We define the \textit{rooted weighted tree}
$(\T,\groot, \W)$ as the tuple containing the tree $\T$ with root vertex $\groot$, where weights are assigned to the edges of $\T$ represented by the weights sequence $\W$.
In the following we assume that realisations of $\W$  are i.i.d. from a standard uniform distribution. 
On this tree we define a sequence of subtrees $((T_n,\groot,\W(T_n))_{n\geq 0}$, where $T_n=(V_n,E_n)$ and $\W(T_n)$ is the natural restriction of weight sequence $\W$ to the edges in $E_n$.
This sequence represents a process called \textit{invasion percolation},
which we define iteratively. Start with rooted weighted tree $T_0=(\varnothing,\varnothing,\W(T_0))$, then $V(T_n)= V(T_{n-1})\cup \{v_n\}$, where $v_n$ is  defined by 
\begin{equation}
\label{eq:prim_steps}
    v_n = \text{argmin}_{u_n}\Big\{w(e_n): e_n=\{v_n,u_n\}, v_n\in V(\T)\setminus V(T_{n-1}), \exists u_n\in V(T_{n-1}), (v_n,u_n)\in E(T) \Big\},
\end{equation}
 and $E(T_n)=E(T_{n-1})\cup \{e_n\}$. 
 Intuitively, invasion percolation at every step adds the vertex via minimal edge weight from the current cluster. Invasion percolation shares this similarity \cite{Barabasi1996} with Prim's algorithm \cite{Prim1957}, an algorithm commonly used for finding the minimal spanning tree on finite weighted graphs. 
 
 We define the \textit{invasion percolation cluster} by the rooted weighted tree
\begin{equation}
    T=\lim_{n\to\infty} T_n.
\end{equation}
The weighted BP tree,
$(T,\varnothing, \W(T))$, has been extensively analysed in \cite{Gundlach2022}. The authors showed that $T$ consists of a unique path to infinity of vertices $(v_i)_{i\geq 0}$ with $v_0=\varnothing$. On this path, finite forests are attached. We refer to Figure \ref{fig:ipcgeo} for a schematic overview of a realisation of the IPC. A key part of the analysis regards the \textit{future maximum weight} sequence $(W_i)_{i\geq 0}$. This sequence denotes the largest weight added after the $i$-th backbone vertex. One also finds
\begin{equation}
\label{eq:convwk}
    \limsup_{n\to\infty} w(e_n) = \lim_{k\to\infty} W_k = p_c = \expec[X]^{-1},
\end{equation}
under the convention that $p_c=0$ if $\expec[X]=\infty$. We note that $p_c$ is also known as the critical percolation probability.
The main theorem from the aforementioned article regarded the volume scaling of the so-called $k$-cut IPC, denoted by $\M_k$, and defined by the object after cutting the edge between the $k$-th and $(k+1)$-th edge on the path to infinity, see \cite[Definition 1.1]{Gundlach2022}. 
In our work we extend this analysis to scaling of the \textit{weights} in $\M_k$. More specifically, we are interested in  
\begin{equation}
\label{eq:def_texttt_B_k}
    \B_k = \sum_{e\in E(\M_k)} w(e).
\end{equation}
The scaling of $M_k = |\M_k|$ is shown to have three different regimes: $\alpha>2$, $\alpha\in (1,2)$ and $\alpha\in(0,1)$, where between regimes the scaling of the $M_k$ varies significantly. Most interestingly, when discussing the scaling of the cumulative weights, this notion of regimes shifts.
For convenience, we now distinguish between three \textit{instances}.
We define the \textit{first instance} by $\alpha\in(1,\infty)\setminus \{2$\}, the \textit{second instance} by $\alpha\in (1/2,1)$ and the \textit{third instance} by $\alpha\in(0,1/2)$. With this we present our main result:

\begin{theorem}[Weight scaling of the IPC]
\label{thm:main}
Let $\B_k$ denote cumulative weights of the $k$-cut IPC (c.f. \eqref{eq:def_texttt_B_k}) and denote the number of vertices in $\M_k$ by $|\M_k|=M_k$. Then $\B_k$ satisfies the following scaling result as $k\to\infty$:

\noindent\textbf{First instance.} Assume $\alpha>2$ or $\alpha\in(1,2)$. Furthermore, suppose that there exists a $\gamma\geq 2$ and an a.s. finite random process $(Z_\alpha(t))_{t>0}$
such that $(k^{-\gamma}M_{\lceil kt\rceil})_{t>0}\xrightarrow{\D} (\int_0^t Z_\alpha(u)\dif u)_{t> 0}$. Then,
\begin{equation}
  \big(k^{-\gamma}  \B_{\lceil kt\rceil}\big)_{t> 0} \xrightarrow{\D}  \Big(\frac{p_c}{2}
  \int_0^t  Z_\alpha(u)\dif u \Big)_{t>0}
  .
\end{equation}

\noindent\textbf{Second instance.} Assume $\alpha \in (1/2,1)$. Furthermore, suppose that there exists an a.s. finite random variable $Z_\alpha$
such that $W_{k}^{\alpha/(1-\alpha)}(M_{ k} -M_{k-1})\xrightarrow{d} Z_\alpha$. Then there exists a sequence $(\tilde Z_\alpha(\ell))_{\ell\geq 1}$ (defined in \eqref{eq:def_tildeZ}) and an $m>0$, such that,
\begin{equation}
  \big( W_k^{\alpha/(1-\alpha)-1} \B_{k-\ell}\big)_{l=1}^m \xrightarrow{d} (\tilde Z_\alpha(\ell) )_{\ell=1}^m.
\end{equation}

\noindent\textbf{Third instance.}
Assume $\alpha\in(0,1/2)$. Then
there exists an a.s. finite random variable $\tilde Z_\alpha$ such that 
\begin{equation}
    \B_{k}  \xrightarrow{d} \tilde Z_\alpha.
\end{equation}
\end{theorem}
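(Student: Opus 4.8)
The plan is to reduce all three instances to a single structural decomposition of the IPC and then handle the edge weights conditionally on the combinatorial structure. Writing $v_0=\groot,v_1,v_2,\dots$ for the backbone, letting $\mathcal{F}_i$ denote the finite forest attached at $v_i$, and letting $e_i^\star$ be the $i$-th backbone edge, I would decompose
\[
\B_k = \sum_{i=0}^{k-1} w(e_i^\star) + \sum_{i=0}^{k} \sum_{e\in E(\mathcal{F}_i)} w(e),
\qquad
M_k = \Theta(k) + \sum_{i=0}^{k} |\mathcal{F}_i|.
\]
The key probabilistic input is a conditional-weight lemma: conditional on the combinatorial structure of the IPC and on the future-maximum threshold governing $\mathcal{F}_i$ (which is of order $W_i$), the weights of the edges in $\mathcal{F}_i$ are, up to a negligible correction, i.i.d.\ uniform below that threshold, and hence have conditional mean of order $W_i/2$. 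Since $W_i\to p_c$ by \eqref{eq:convwk}, this is precisely the source both of the factor $p_c/2$ in the first instance and of the extra factor $W_k$ per edge that separates the weight exponent from the volume exponent in the second and third instances.

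For the first instance ($p_c>0$) the backbone sum contributes only $O(k)$, which is negligible against the scale $k^\gamma$ with $\gamma\geq 2$. Replacing each forest weight by its conditional mean gives $\sum_i \tfrac{W_i}{2}|\mathcal{F}_i|$; after splitting off the finitely many early levels on which $W_i$ is bounded away from $p_c$ (an $O(1)$-in-probability contribution, invisible at scale $k^\gamma$) and using $W_i\to p_c$, this equals $\tfrac{p_c}{2}M_{\lceil kt\rceil}(1+o_\prob(1))$. A conditional second-moment estimate controls the fluctuations of the forest weights around these means uniformly in $t$ on compacts, and the assumed convergence $k^{-\gamma}M_{\lceil kt\rceil}\xrightarrow{\D}\int_0^t Z_\alpha(u)\dif u$ then transfers to $k^{-\gamma}\B_{\lceil kt\rceil}\xrightarrow{\D}\tfrac{p_c}{2}\int_0^t Z_\alpha(u)\dif u$, the multiplicative constant and tightness being preserved under the $J_1$ topology.

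In the regimes $\alpha<1$ one has $p_c=0$, so the conditional mean weights $W_i/2$ themselves vanish and the averaging mechanism breaks down. For the second instance ($\alpha\in(1/2,1)$) the increments $M_i-M_{i-1}\sim W_i^{-\alpha/(1-\alpha)}$ grow geometrically, so $\B_{k-\ell}\approx\sum_j \tfrac{W_{k-\ell-j}}{2}(M_{k-\ell-j}-M_{k-\ell-j-1})$ is dominated by its most recent levels. Multiplying by $W_k^{\alpha/(1-\alpha)-1}$ and combining the assumed limit $W_i^{\alpha/(1-\alpha)}(M_i-M_{i-1})\xrightarrow{d}Z_\alpha$ with the limiting joint law of the ratios $(W_{k-\ell}/W_k)_\ell$, the extra factor $W_{k-\ell}$ per edge shifts the exponent by exactly one (so that the net power of $W_k$ is zero) and yields a summable series whose distributional limit defines $\tilde Z_\alpha(\ell)$ as in \eqref{eq:def_tildeZ}. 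For the third instance ($\alpha\in(0,1/2)$) the same heuristic gives a per-level weight increment of order $W_k\cdot W_k^{-\alpha/(1-\alpha)}=W_k^{(1-2\alpha)/(1-\alpha)}$, whose exponent is now positive; establishing the polynomial decay rate of $W_k$ and a matching tail bound on the forest weight increments then shows that $\sum_k(\text{weight increment at level }k)$ converges in distribution to a finite $\tilde Z_\alpha$, with no scaling.

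I expect the main obstacle to lie in the second instance. Unlike the first, where a law of large numbers suppresses all dependence, here the limit is carried by the last few forests, so one cannot average and must instead control the joint distribution of the threshold ratios $(W_{k-\ell}/W_k)$ and the correlated forest sizes simultaneously, and then verify that the conditional-uniformity correction to the edge weights is genuinely negligible even though each individual weight is of the same order $W_k$ as the quantity being estimated. A secondary difficulty is the upgrade to $J_1$ functional convergence in the first instance, where one must rule out macroscopic jumps created by an atypically heavy single forest; this requires a tail bound on the forest weight sums that is uniform in $t$ rather than pointwise.
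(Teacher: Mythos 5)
Your proposal follows essentially the same route as the paper: the same backbone-plus-attached-forests decomposition, the same conditional-uniformity lemma giving mean weight $W_i/2$ per forest edge (the source of the $p_c/2$ factor in the first instance and of the exponent shift in the other two), the same reduction of the second instance to the ratio process $(W_{k-\ell}/W_k)_\ell$ combined with the assumed increment limit, and the same summability argument in the third instance. The only notable differences in execution are that the paper runs the first two instances through conditional Laplace transforms (where the expansion $\expec[\e^{-s\Unif[0,w]}]=\e^{-sw/2+O(s^2w^2)}$ packages your mean-plus-fluctuation control in a single step, and tightness for the $J_1$ upgrade comes from the stochastic domination $\B_k\preceq M_k$ rather than a separate uniform tail bound), and that the decay of $W_k$ used in the third instance is exponential rather than polynomial --- a stronger bound that only makes your summability claim easier.
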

While the conditions imposed on $M_k$ may seem restrictive in the first and second instance, it is readily shown that they are satisfied as shown in \cite[Theorem 1.3]{Gundlach2022}. We also refer to this paper for the exact expressions of $Z_\alpha$ for $\alpha>2$, $\alpha\in(1,2)$ and $\alpha\in(1/2,1)$. In Section \ref{sec:intuition} we discuss the intuition behind Theorem \ref{thm:main}.

In the following we prove Theorem \ref{thm:main} per instance. As the scaling and the limiting random variables of $M_k$ are known, the limiting distribution and scaling for $\B_k$ are also explicit in the first instance. In the second instance, the scaling limit can also be determined in a straightforward manner. However, in the third instance we do not know $\tilde Z_\alpha$ explicitly, but we provide an example distribution for which we explicitly calculate $\expec[\tilde Z_\alpha(t)]$.

Deconstructing the IPC may seem daunting as, other than its construction, its geometry is not directly clear. However, it turns out that by conditioning on the future maximum weight sequence $(W_k)_{k\geq 0}$, the $k$-cut IPC can be seen as a sequence of $k$ finite independent forests, in which the weights are  i.i.d. uniformly distributed between 0 and $W_k$, c.f. \cite[Section 2]{Gundlach2022}. For $k\geq 0$, let $B_k = \B_k-\B_{k-1}$, i.e. the cumulative weights attached to backbone vertex $k$, under the convention that $\B_{-1}=0$, then: 

\begin{lemma}[Scaling of $B_k$]
\label{lem:scaling_B_k}
Consider the set of vertices $\H_k=\M_k\setminus\M_{k-1}$, i.e. all vertices that are in $T$ via the $k$-th backbone vertex and set $|\H_k|=H_k$. Condition on $W_k=w_k$, then the edge weights in $\H_k$, $(w(e_i))_{i=1}^{H_k}$, are i.i.d. $U_i\sim \Unif[0,w_k]$ distributed, i.e.,
\begin{equation}
\label{eq:def_B_k}
    B_k \equiv \sum_{i=1}^{H_k} U_i.
\end{equation}
\end{lemma}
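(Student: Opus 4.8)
The plan is to reduce the claim to the structural characterisation of the IPC recorded in \cite[Section 2]{Gundlach2022}: conditionally on the future maximum weight sequence, the $k$-cut IPC decomposes into independent forests, and the forest $\H_k$ attached to $v_k$ is exactly the off-backbone cluster of $v_k$ consisting of those vertices $u$ for which every edge on the path from $v_k$ to $u$ has weight below $W_k$. Taking this characterisation as input, the only remaining content is to verify that, conditionally on $W_k=w_k$, the weights retained in $\H_k$ are i.i.d.\ $\Unif[0,w_k]$; equation \eqref{eq:def_B_k} then follows by summing over $E(\H_k)$.

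First I would fix the combinatorial shape of $\H_k$ and expose it by an exploration started at $v_k$. Querying the off-backbone edges one at a time, an examined edge $e$ is retained in $\H_k$ precisely when $w(e)<w_k$ and is otherwise a closed boundary edge; by the characterisation above, membership of a vertex in $\H_k$ depends on $\W$ only through the threshold indicators $\{w(e)<w_k\}$. Since the underlying weights $\W$ are i.i.d.\ $\Unif[0,1]$ and independent of the tree $\T$, each query reveals only whether the edge falls below or above $w_k$, and nothing further about its value.

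Next I would invoke the elementary fact that for $U\sim\Unif[0,1]$ one has $U\mid\{U<w_k\}\sim\Unif[0,w_k]$, together with the independence of the original weights: because the exploration reads off only threshold indicators, the conditional laws of the examined weights are mutually independent, and restricting to the retained edges shows that, conditionally on $W_k=w_k$ and on the realised shape of $\H_k$ with $H_k$ edges, the weights $(w(e_i))_{i=1}^{H_k}$ are i.i.d.\ $\Unif[0,w_k]$. Summing and using $\B_k-\B_{k-1}=\sum_{e\in E(\H_k)}w(e)$ then yields $B_k\equiv\sum_{i=1}^{H_k}U_i$.

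The main obstacle is the circular dependence between the weights and the random shape of $\H_k$: the set $\H_k$ is itself a function of $\W$, so conditioning on $\{W_k=w_k\}$ is not merely truncating a fixed family of independent weights below a fixed level. Disentangling this is precisely the role of the exploration argument, whose crux is that membership in $\H_k$ is governed by the threshold event alone. In particular one must confirm that the edge attaining the future maximum $w_k$ lies on the backbone continuation from $v_k$, hence outside $\H_k$, so that no retained edge is pinned to the value $w_k$ and the law $\Unif[0,w_k]$ is non-degenerate; this is guaranteed by the decomposition of \cite[Section 2]{Gundlach2022}. Finally, since $\{W_k=w_k\}$ is a null event, the conditioning should be interpreted through the regular conditional distribution given the future maximum sequence, which the cited decomposition supplies.
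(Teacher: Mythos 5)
Your proposal is correct and follows essentially the same route as the paper: both arguments rest on the characterisation that a vertex belongs to $\H_k$ precisely when every edge on its path to $v_k$ has weight below $w_k$ (which the paper proves inline from the invasion dynamics and you import from \cite[Section 2]{Gundlach2022}), and both then conclude by observing that conditioning an i.i.d.\ $\Unif[0,1]$ weight on the threshold event $\{w(e)<w_k\}$ yields an independent $\Unif[0,w_k]$ law. Your exploration framing makes explicit the point the paper treats tersely---that membership in the random set $\H_k$ depends on $\W$ only through the threshold indicators, so no further information about the retained weights is revealed---but this is a refinement of presentation rather than a different argument.
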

Additionally, this also implies conditional independence of the attached forests of different backbone vertices:
\begin{corollary}[Independence of $B_k$]
\label{cor:indep_B_k}
    Consider $B_k$ as defined in \eqref{eq:def_B_k}. Then, conditionally on $(W_k)_{k\geq 0}=(w_k)_{k\geq 0}$, $(B_i)_{i\geq 1}$ is a collection of independent random variables.
\end{corollary}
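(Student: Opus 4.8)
The plan is to deduce the conditional independence of the $(B_i)_{i\geq 1}$ from the conditional independence of the forests $(\H_i)_{i\geq 1}$ out of which they are built, which is the structural fact recorded in \cite[Section 2]{Gundlach2022}. The key observation is that each $B_k$ is, by Lemma~\ref{lem:scaling_B_k}, a measurable function of the forest $\H_k$ together with its restricted weight sequence $\W(\H_k)$; once the pairs $(\H_i,\W(\H_i))_{i\geq 1}$ are known to be conditionally independent given $(W_k)_{k\geq 0}=(w_k)_{k\geq 0}$, independence of the $B_i$ is immediate, since measurable functions of independent random objects are independent.

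The substance of the argument therefore lies in justifying the conditional independence of the forests themselves. First I would recall the geometric decomposition of the IPC: the $k$-cut IPC $\M_k$ is the backbone $v_0,\dots,v_k$ with, at each backbone vertex $v_i$, a finite forest $\H_i=\M_i\setminus\M_{i-1}$ hanging off it, consisting of the non-backbone descendants retained by the invasion. Crucially, for distinct $i\neq j$ these forests are built from disjoint subtrees of the underlying BP tree $\T$, namely the portions of $\T$ rooted at the non-backbone children of $v_i$ and of $v_j$ respectively. Because $\T$ has i.i.d.\ offspring and the weight sequence $\W$ is i.i.d., these disjoint portions, together with their weights, are mutually independent as unconditioned random objects.

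The hard part will be handling the conditioning cleanly, since the future maximum weights $(W_k)_{k\geq 0}$ are themselves functionals of the weights on the whole tree, so naively conditioning on them could a priori couple the forests. Here I would invoke the resampling description from \cite[Section 2]{Gundlach2022}: one first samples the future maximum weight sequence, and then, given $(W_k)_{k\geq 0}=(w_k)_{k\geq 0}$, each forest $\H_k$ is generated as an independent finite percolation cluster on the disjoint portion of $\T$ rooted at $v_k$, with i.i.d.\ $\Unif[0,w_k]$ edge weights exactly as in Lemma~\ref{lem:scaling_B_k}. Under this description the forests are manifestly conditionally independent. Combining this with the first paragraph, where each $B_k$ is expressed as a function of $(\H_k,\W(\H_k))$ alone, yields that $(B_i)_{i\geq 1}$ is a collection of conditionally independent random variables, completing the proof.
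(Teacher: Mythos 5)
Your proposal is correct and follows essentially the same route the paper takes: the corollary is presented there as an immediate consequence of the structural decomposition of the IPC from \cite[Section 2]{Gundlach2022}, namely that conditionally on $(W_k)_{k\geq 0}$ the attached forests are independent with i.i.d.\ $\Unif[0,w_k]$ weights, and each $B_k$ is a measurable function of its own forest. Your additional care about why conditioning on the future maximum weights does not couple the forests (via the resampling construction) is exactly the point the paper leaves implicit.
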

Based on Lemma \ref{lem:scaling_B_k} and Corollary \ref{cor:indep_B_k}, it follows that, conditionally on $(W_k)_{k\geq 0} = (w_k)_{k\geq 0}$, 
\begin{equation}
    \B_k = \sum_{i=1}^k B_k
    =\sum_{i=1}^k\sum_{j=1}^{H_i} U_{ij},
\end{equation}
where $U_{ij}\sim\Unif[0,w_i]$, and the $(H_i)_{i=0}^k$ are independent. We refer to Figure \ref{fig:ipcgeo} for an overview of the aforementioned objects on the IPC.

\begin{figure}
    \centering
    \input{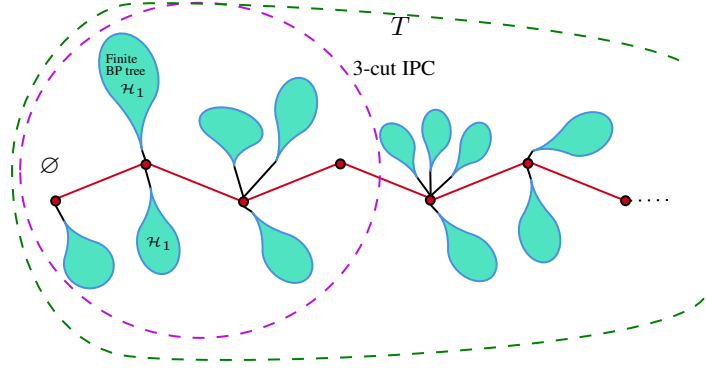}
    \caption{Schematic overview of the IPC. We highlight the backbone (red), the finite attached forests $\H_k$ (cyan) in the $k$-cut IPC (purple ball).}
    \label{fig:ipcgeo}
\end{figure}

\subsection{Intuition behind the main result}
\label{sec:intuition}
In the first instance,  
Theorem \ref{thm:main} outlines a clear and intuitive connection between the scaling of $\B_k$ and $M_k$. Indeed, the result is representative of a law-of-large-numbers result. This is not trivial, as the weights in the $k$- cut IPC are not i.i.d. 
It can be intuitively understood, however, by \cite[Theorem 1.3]{Gundlach2022}, where it is shown that the dominant contribution is from a few finite forests that are at least distance $k\varepsilon$ from the root. At such distance, $W_k$ is close to $p_c$ and, combining this with the results from Lemma \ref{lem:scaling_B_k} and Corollary \ref{cor:indep_B_k}, the discovered weights are nearly i.i.d. Therefore, the majority of the weights in the $k$-cut IPC is likely to contribute $\expec[\Unif[0,W_k]]\approx p_c/2$.

In the second and third instance, where $W_k\xrightarrow{a.s.} p_c=0$, the specific scaling changes. In contrast to the result in \cite[Theorem 1.3]{Gundlach2022}, where the dominant contribution is from large degrees of backbone vertices far way from the root, these vertices are likely to contribute a negligible weight. However, from the aforementioned source, we also find  $M_k$ scales as $ W_k^{-\alpha/(1-\alpha)}$ and, using the same argumentation as in the first instance, we would then find
\begin{equation}
    B_k \simeq W_k/2 \cdot W_k^{-\alpha/(1-\alpha)} \simeq
    W_k^{(1-2\alpha)/(1-\alpha)}.
\end{equation}
As $W_k\xrightarrow{a.s.} 0$ in these instances, this diverges only in the second instance, i.e. when $\alpha\in(1/2,1)$. Moreover, as $W_k$ converges exponentially fast (see \cite[Theorem 1.8]{Gundlach2022} or Example \ref{ex:expec cumulative weights}), in the third instance, $\B_k$ should converge without an additional scaling factor.

\paragraph{Organisation of the paper}
We start by providing some insights on the IPC in Section \ref{sec:detailsIPC}. These results were derived in detail in \cite[Section 2-3]{Gundlach2022} and are here repeated as they are key for the upcoming analysis. Moreover, the underlying construction of the IPC is used to prove Lemma \ref{lem:scaling_B_k} in Section \ref{sec:prooflemma1.2}. We then move to the proof of Theorem \ref{thm:main}. In Section \ref{sec:firstinstance}, we provide the proof for the first instance and in Section \ref{sec:secondinstance} for the second instance. Finally, in Section \ref{sec:thirdinstance} we provide the proof for the third instance and also provide a worked-out example in Section \ref{sec:worked out example}.

\section{Details on the IPC}
\label{sec:detailsIPC}
In the following, we give a short overview of how $T$ is constructed. We do not go in detail, but refer to \cite[Section 2-3]{Gundlach2022} for more results and proofs on the results presented below. 

\subsection{Properties of branching processes}
We write $\T(p)$ for the weighted connected component of an i.i.d. branching process that is \textit{percolated} with parameter $p$. In other words, $\T(p)$ is the connected component that includes the root after all edges with weight larger than $p$ are removed. $\T(p)$ is then again a branching process with offspring distribution $\Binom(X,p)$. We define $\eta(p)$ as its extinction probability and $\theta(p)=1-\eta(p)$ as its survival probability. From classical branching process theory, we can deduce that 
\begin{equation}
\label{eq:eta(p)}
    \eta(p) = 
    \expec\big[ \eta(p)^{\Binom(X,p)}\big]
    =\expec\big[(1-p\theta(p))^X\big].
\end{equation}
Moreover, as $X\geq 1$ and based on \eqref{eq:convwk}, we find $\theta(1)=1$ and $\theta(p_c)=0$.
In the upcoming analysis, we need the scaling of $\theta(p)$ around $p_c=1/\expec[X]$. This is given as follows:
\begin{lemma}[scaling of $\theta(p)$]
\label{lem:scaling_theta}
For $p\to p_c$
\begin{equation}
    \theta(p) = 
    \begin{cases}
         (p-p_c)(1+o(1)) & \alpha>2,\\
          (p-p_c)^{1/(\alpha-1)}(1+o(1))& \alpha\in(1,2),\\
          p^{\alpha/(1-\alpha)}(1+o(1))& \alpha\in(0,1).\\
    \end{cases}
    \end{equation}
\end{lemma}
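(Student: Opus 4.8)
The plan is to treat $\theta(p)$ as the nontrivial fixed point of the extinction equation \eqref{eq:eta(p)}. Setting $u=p\,\theta(p)$ and introducing
\begin{equation}
  \psi(u) = 1 - \expec\big[(1-u)^X\big],
\end{equation}
the relation \eqref{eq:eta(p)} reads $\theta(p)=\psi\big(p\,\theta(p)\big)$. Since $\theta(p_c)=0$ and $p\mapsto\theta(p)$ is continuous, we have $\theta(p)\to 0$, and hence $u\to0$, as $p\to p_c$, so the asymptotics are entirely governed by the behaviour of $\psi$ near the origin. The lemma thus reduces to two tasks: (i) obtaining a sharp small-$u$ expansion of $\psi$, and (ii) inverting the fixed-point relation to leading order.

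For task (i) I would avoid a naive Taylor expansion, since the relevant moments are infinite in the interesting regimes, and instead split the expectation at the scale $X\asymp 1/u$, using that $1-(1-u)^x\approx xu$ when $xu\ll1$ and $\approx 1$ when $xu\gg1$, combined with the tail \eqref{eq:heavy_tail}. Integrating against $\dif F$ and retaining the dominant contribution from each region yields the following dichotomy as $p\to p_c$. If $\expec[X]<\infty$ (that is, $\alpha>1$, recall $p_c=\expec[X]^{-1}$ by \eqref{eq:convwk}) then $\psi(u)=\expec[X]\,u - a_\alpha\,u^{\min(2,\alpha)}\big(1+o(1)\big)$, where the correction is genuinely of order $u^2$ for $\alpha>2$ and of order $u^\alpha$ for $\alpha\in(1,2)$. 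If instead $\expec[X]=\infty$ (that is, $\alpha\in(0,1)$) the linear term disappears and $\psi(u)=b_\alpha\,u^\alpha\big(1+o(1)\big)$, with $b_\alpha$ determined by $c_{\sss X}$ and $\alpha$.

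For task (ii) I substitute this expansion into $\theta=\psi(p\theta)$ and use $1-\expec[X]\,p=-(p-p_c)/p_c$ in the finite-mean case, respectively $p_c=0$ in the infinite-mean case. When $\alpha>2$, balancing $\theta\,(p-p_c)/p_c = a_\alpha(p\theta)^2\big(1+o(1)\big)$ forces $\theta(p)\propto (p-p_c)$; when $\alpha\in(1,2)$, balancing $\theta\,(p-p_c)/p_c = a_\alpha(p\theta)^\alpha\big(1+o(1)\big)$ gives $\theta(p)^{\alpha-1}\propto (p-p_c)$, i.e.\ $\theta(p)\propto (p-p_c)^{1/(\alpha-1)}$; and when $\alpha\in(0,1)$, the relation $\theta=b_\alpha(p\theta)^\alpha\big(1+o(1)\big)$ gives $\theta(p)^{1-\alpha}\propto p^\alpha$, i.e.\ $\theta(p)\propto p^{\alpha/(1-\alpha)}$. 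Each inversion is justified by a short bootstrap: one first confirms $\theta\to0$, then checks that the discarded higher-order terms are $o(\cdot)$ of those retained, so the displayed leading order is legitimate; keeping track of $a_\alpha,b_\alpha$ and $p_c$ then recovers the exponents stated (and pins down the leading constant).

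The main obstacle is the small-$u$ asymptotics of $\psi$, and in particular the correction term in the finite-mean case. The most delicate regime is $2<\alpha<3$, where $\expec[X^2]=\infty$ so the formal second-order Taylor coefficient $\tfrac12\expec[X(X-1)]$ diverges; here one must show that the \emph{truncated} second moment $\int^{1/u}x^2\,\dif F$ nonetheless converges to a finite limit, producing an honest $u^2$ correction so that the scaling remains exactly linear across all $\alpha>2$, while the competing tail contribution, of order $u^\alpha\ll u^2$, is negligible. Handling the $(1+o(1))$ factor in \eqref{eq:heavy_tail} uniformly inside these truncated integrals, and ensuring the error terms do not contaminate the leading balance in the subsequent inversion, is the part requiring the most care.
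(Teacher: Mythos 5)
The paper offers no proof of this lemma at all --- it is quoted verbatim from \cite[Lemma 3.1]{Gundlach2022} --- so there is no in-house argument to compare against. Your strategy (write $\theta(p)=\psi(p\theta(p))$ with $\psi(u)=1-\expec[(1-u)^X]$, extract the small-$u$ asymptotics of $\psi$ from the tail \eqref{eq:heavy_tail} by splitting at $X\asymp 1/u$, i.e.\ Karamata/Tauberian theory, then invert the resulting balance) is the standard route for such results and is essentially what the cited reference does. The dichotomy you state for $\psi$, and the three inversions, all yield the correct exponents, and the bootstrap remark about confirming $\theta\to 0$ before discarding higher-order terms is the right way to make the inversion rigorous.

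There is, however, one concrete error in your discussion of the ``main obstacle''. You claim that for $2<\alpha<3$ one has $\expec[X^2]=\infty$, so that $\frac{1}{2}\expec[X(X-1)]$ diverges and must be replaced by a truncated second moment. This is false: under \eqref{eq:heavy_tail} the second moment is finite precisely when $\alpha>2$ and infinite for $\alpha\le 2$, so for every $\alpha>2$ the coefficient of $u^2$ is an honest finite constant and no truncation is needed. Had your premise been true, the conclusion would actually change --- an infinite second moment is exactly what produces the anomalous correction $a_\alpha u^\alpha$ and hence the exponent $1/(\alpha-1)$, which is why that behaviour occurs for $\alpha\in(1,2)$ and not for $\alpha\in(2,3)$. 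The genuine (and milder) delicacy for $\alpha\in(2,3)$ sits one order higher: $\expec[X^3]=\infty$, so one cannot Taylor-expand to third order and must instead verify that the remainder beyond the $u^2$ term is $o(u^2)$, e.g.\ by dominated convergence using $0\le xu-\big(1-(1-u)^x\big)\le \binom{x}{2}u^2$. With that correction your expansion $\psi(u)=\expec[X]u-a_\alpha u^2(1+o(1))$ for $\alpha>2$ stands and the rest of the argument goes through. (A cosmetic point: your inversion produces explicit nontrivial constants in front of $(p-p_c)$, $(p-p_c)^{1/(\alpha-1)}$ and $p^{\alpha/(1-\alpha)}$, whereas the lemma displays constant $1$; that is a normalisation in the statement, not a defect of your proof.)
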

\begin{proof}
    \cite[Lemma 3.1]{Gundlach2022}.
\end{proof}
\subsection{Construction of the IPC}
Firstly, as described before, $T$ is \textit{one-ended}, meaning it has a unique path to infinity $(v_i)_{i\geq0}$ and denote the corresponding weights on this path by $(\beta_i)_{i\geq1}$
\cite[Lemma 2.1]{Gundlach2022}
. We define the future maximum weights by
\begin{equation}
    W_k = \max_{i>k} \beta_i.
\end{equation}
It is also known that $W_k$ evolves according to a Markov chain given by 
\begin{equation}
\label{eq:Wk_Mc}
    \begin{aligned}
        \prob(W_{k+1}
        =W_k\mid W_k) &= 1-\expec[X(1-p\theta(p))^{X-1}]
        \frac{\theta(W_k)}{\theta'(W_k)},\\
        \prob(W_{k+1}
        <u\mid W_k) &= \expec[X(1-p\theta(p))^{X-1}]
        \frac{\theta(u)}{\theta'(W_k)},
        \qquad u\in[0,W_k].
    \end{aligned}
\end{equation}
See \cite[Theorem 1.6]{Gundlach2022} for a detailed overview.
It turns out that conditioning on $W_k=w_k$ is paramount in the upcoming analysis and therefore we will write
\begin{equation}
\label{eq:prob_k}
    \prob_k(\cdot) = \prob(\ \cdot\mid W_k=w_k)
   \quad \text{ and }\quad  
    \expec_k[\cdot ] = \expec_k[\ \cdot\mid W_k=w_k].
\end{equation}
Conditioned on $\{W_k=w_k\}$, the distribution of the $k$-th backbone degree $D_k$ is given by 
\begin{equation}
\label{eq:prereq_backbonedegree}
    \prob_k(D_{v_k}= x) = 
    \frac{x(1-w_k\theta(w_k))^{x-1}\prob(X=x)}{\expec_k[X(1-w_k\theta(w_k))^{X-1}]},
\end{equation}
On each of the $D_{v_k}$ edges of the $k$-th backbone vertex, a conditionally finite tree is attached. We write $\tilde X_k$ for its offspring distribution and find
\begin{equation}
\label{eq:prereq_tildeX}
    \prob_k(\tilde X_k= x) = 
    \eta(w_k)^{x-1}\sum_{y>x}\binom{y}{x} w_k^x(1-w_k)^{y-x} \prob(X=y).
\end{equation}
With these ingredient, one can systematically construct the IPC. For more details on this construction algorithm or for more detail on \eqref{eq:prereq_backbonedegree} and \eqref{eq:prereq_tildeX}, we refer to \cite[Theorem 1.5]{Gundlach2022}.

\subsection{Weights on the IPC}
\label{sec:prooflemma1.2}
The weights on the IPC can be split in two classes, those on the backbone of the IPC and those on the finite attached forests. First, as weights are standard uniformly distributed, the weights on the backbone of the IPC are trivially bounded by  
 \begin{equation}
 \label{eq:bound_weights_backbone}
     \sum_{i=1}^k \beta_i
     \leq k.
 \end{equation}
Second, the weights in the finite attached clusters are more crucial to our analysis and are formalised in Lemma \ref{lem:scaling_B_k}. We  provide the proof for the result below:

 \begin{proof}[Proof of Lemma \ref{lem:scaling_B_k}]
  Consider $u_k\in \T$, such that $v_k$ is the closest backbone vertex (in graph distance) to $u_k$ and condition on $W_k=w_k$. We then claim that $u_k\in T$ if and only if the path between $u_k$ and $v_k$ consists of weights only below $w_k$

  \textbf{If.} Suppose that the path between $u_k$ and $v_k$ has weights only below $w_k$. Recall that, by conditioning on $W_k=w_k$, at some point a weight of exactly $w_k$ is added by invasion percolation. However, based on 
  \eqref{eq:prim_steps}, it means that before that time, all weights below $w_k$ that are of distance 1 from the current invasion percolation are added first. If the path between $u_k$ and $v_k$ only consists of weights below $w_k$, $u_k$ is explored before the weight $w_k$ is added.

  \textbf{Only if.} Suppose there exists at least on weight $w^*>w_k$ on the path between $u_k$ and $v_k$. Recall that, by conditioning on $W_k=w_k$ that
  the weight $w_k$ is the largest weight that invasion percolation will add. This means that $w^*$ is not explored by invasion percolation, and, as a result, neither is $u_k$.

  Then, we take $e_1$ and $e_2$ edges in $H_k$ and condition on $W_k=w_k$. Based on the observation above, the weights of $e_1$ and $e_2$ must be smaller than $w_k$. As weights are assigned i.i.d. standard uniformly on all edges, we find
  \begin{equation}
  \begin{aligned}
      \prob_k( w(e_1)<x_1, w(e_2)<x_2) &=
      \prob_k(w(e_1)<x_1, w(e_2)<x_2 \mid 
      w(e_1)<w_k, w(e_2)<w_k)\\
      &=
      \prob_k(w(e_1)<x_1\mid  
      w(e_1)<w_k) \prob_k( w(e_2)<x_2 \mid w(e_2)<w_k)\\
      &=
      \prob_k(\Unif[0,w_k]<x_1)
      \prob_k(\Unif[0,w_k]<x_2).
  \end{aligned}    
  \end{equation}
 \end{proof}

\section{Weight scaling in the first instance}
\label{sec:firstinstance}
In this section we give the proof of the first instance of Theorem \ref{thm:main}. In this instance the supremum of the weights that are included over time stay significantly large as illustrated in \eqref{eq:convwk}, which explains the result for the first instance.
The proof is split in three parts that we outline below in more detail:

\begin{enumerate}
\item \hyperref[inst1_step1]{Part 1: Convergence of the one-dimensional conditional transform.}
The proof uses the Laplace transform of $\B_k$ by relating it to the transform of $M_k$ in a straightforward manner.
Assume $(w_k)_{k\geq 0}$ is a non-increasing process between $[0,1]$ that converges to $p_c$. Then we prove the first instance conditioned on $(W_k)_{k\geq 0} = (w_k)_{k\geq 0}$.
\item\hyperref[inst1_step2]{Part 2: Extension to the one-dimensional unconditional transform.}
Part 1 shows functional convergence of the transform in terms of $(w_k)_{k\geq0}$. As we know that $W_k\xrightarrow{a.s} p_c$, we apply the continuous function theorem in a strategic way to extend the result of part 1 to convergence in distribution after which we conclude the result by applying the Portmanteau theorem.
\item\hyperref[inst1_step3]{Part 3: Extension to $J_1$ convergence.}
When considering $\B_{\lceil kt\rceil}$ as a
random functional in $t$, Part 2 shows, in essence, pointwise convergence. In this part we extend this result to $J_1$ convergence, which can be interpreted as convergence of the full process. In order to extend the result, it is sufficient to show convergence of the finite dimensional distribution and tightness.
\end{enumerate}

\begin{proof}[Proof of Theorem \ref{thm:main}, first instance]
Assume there exists a $\gamma\geq 2$ such that $(k^{-\gamma}M_{\lceil kt\rceil})_{t>0}
\xrightarrow{\mathcal{D}} (\int_0^t Z_\alpha(u)\dif u)_{t>0}$.\\
\noindent\textbf{Part 1: Convergence of the conditional transform}
\label{inst1_step1}
Recall $B_k$ from \eqref{eq:def_B_k} and recall the notation of \eqref{eq:prob_k}, then for some non-increasing sequence $(w_k)_{k\geq 0}$ such that $w_k\to p_c$ and for any $s>0$ and fixed $t>0$, we find by Lemma \ref{lem:scaling_B_k},
    \begin{equation}
    \label{eq:transform_Bk_firstregime_1}
        \expec_k[\e^{-sk^{-\gamma}B_k}]
        =\expec_k\Big[\e^{-sk^{-\gamma}\sum_{e\in E(\H_k)} w(e)}\Big]
        =\expec_k\Big[\expec_k[\e^{-sk^{-\gamma}\Unif[0,w_k]}]^{H_k}\Big].
    \end{equation}
Next, we can write for the transform of a uniform random variable
\begin{equation}
\label{eq:transform_uniform}
    \expec\big[\e^{-s\Unif[0,1]}\big]
    =\frac{1}{s}(1-\e^{-s}) =
    \e^{\log\big(1-s/2+s^2/6-O(s^3)\big)} =
    \e^{-s/2+O(s^2)}.
\end{equation}
As $\Unif[0,w_k] \equiv w_k\Unif[0,1]$ and based on \eqref{eq:transform_uniform} we find that \eqref{eq:transform_Bk_firstregime_1} simplifies to
\begin{equation}
    \label{eq:transform_Bk_firstregime_2}
    \expec_k\big[\e^{-sk^{-\gamma}B_k}\big]
        =
    \expec_k\Big[\e^{-\frac{1}{2}sw_kk^{-\gamma}H_k+ O(k^{-2\gamma}H_k)}\Big ].
\end{equation}
This result is sufficient to derive the scaling of $\B_k$. Based on Corollary \ref{cor:indep_B_k}, we split $\B_k$ in contributions from the backbone and from the finite attached clusters $B_k$, and find by \eqref{eq:bound_weights_backbone} that for $\gamma\geq 2$
\begin{equation}
\label{eq:transform_Bk_firstregime_3}
    \expec_k\big[\e^{-sk^{-\gamma}\B_{\lceil kt\rceil}}\big]
    =\expec_k\Big[\e^{-sk^{-\gamma}\sum_{i=1}^{\lceil kt\rceil} \beta_i
     -sk^{-\gamma}\sum_{i=1}^{\lceil kt\rceil} B_i}\Big]
     =\expec_k\Big[\e^{
     -sk^{-\gamma}\sum_{i=1}^{\lceil kt\rceil} B_i
     +O(k^{_1-\gamma })}\Big].
\end{equation}
As the $\H_i$ clusters are independent conditioned on $(W_k)_{k\geq0}=(w_k)_{k\geq 0}$, we can write 
\begin{equation}
\label{eq:transform_Bk_firstregime_4}
\begin{aligned}
    \expec_k\big[\e^{
     -sk^{-\gamma}\sum_{i=1}^{\lceil kt\rceil} B_{i}
     +O(k^{1-\gamma})}\big]
     &=
     \prod_{i=1}^{\lceil kt\rceil} \expec_k\big[\e^{
     -sk^{-\gamma} B_i +O(k^{1-\gamma})}\big]\\&=
      \prod_{i=1}^{\lceil kt\rceil}\expec_k\big[\e^{
    - \frac{1}{2}sw_kk^{-\gamma}H_i+ O(k^{1-\gamma})}\big]
    \\&= \expec_k\big[\e^{
     -\frac{1}{2}sp_ck^{-\gamma} M_{\lceil kt\rceil}}\big](1+o(1)).
\end{aligned}
\end{equation}
Finally we let $\varepsilon>0$ be given. We conclude that based on \eqref{eq:transform_Bk_firstregime_4}, there exists a $K_1$ large enough so that for all $k>K$,
\begin{equation}
    \Big|\expec_k\big[\e^{-sk^{-\gamma}\B_{\lceil kt\rceil}}\big]
    -\expec_k\big[\e^{-\frac{1}{2}sp_ck^{-\gamma}M_{\lceil kt\rceil}}\big] \Big|
    <\varepsilon/2,
\end{equation}
and based on the assumption that $k^{-\gamma}M_{\lceil kt\rceil} \xrightarrow{d} \int_0^t Z_\alpha(u)\dif u$, there exists a $K_2$ such that for all $k>K_2$ and all $s>0$
\begin{equation}
    \Big|\expec_k\big[\e^{-sk^{-\gamma}M_{\lceil kt\rceil}}\big]
    -\expec\big[\e^{-s\int_0^t Z_\alpha(u)\dif u }\big]\Big|
    <\varepsilon/2.
\end{equation}
Then, for all $k>\max\{K_1,K_2\}$,
\begin{equation}
\label{eq:conditiona conv alpha>1}
\begin{aligned}
    &\Big|\expec_k\big[\e^{-sk^{-\gamma}\B_{\lceil kt\rceil}}\big] - \expec\big[\e^{-\frac{1}{2}sp_c\int_0^t Z_\alpha(u)\dif u }\big]\Big|
    <\varepsilon/2 +\varepsilon/2=\varepsilon.
\end{aligned}  
\end{equation}
We conclude that $\expec_k[\e^{-sk^{-\gamma}\B_{\lceil kt\rceil}}]\to \expec[\e^{-\frac{1}{2}sp_c\int_0^t Z_\alpha(u)\dif u }]$. \\

\noindent\textbf{Part 2: Extension to the unconditional transform.}
\label{inst1_step2}
As \eqref{eq:conditiona conv alpha>1} holds for the class of non-increasing  sequences $(w_k)_{k\geq0}$ such that $w_k\in[p_c,1]$ for any $k$ and $w_k\xrightarrow{d} p_c$ and as $(W_k)_{k \geq 0}$ is with probability one in this class of sequences, we find by the generalised continuous function theorem \cite[Theorem 3.4.4]{Whitt2002}, implies 
\begin{equation}
    \expec\big[\e^{-sk^{-\gamma}\B_{\lceil kt\rceil} }\mid (W_k)_{k\geq0}\big]\xrightarrow{d} \expec\big[\e^{-\frac{1}{2}sp_c\int_0^tZ_\alpha(u)\dif u}\big].
\end{equation}
As both expectations are uniformly bounded by 1, the Portmanteau theorem extends this convergence to the unconditional expectation:  $\expec[\e^{-sk^{-\gamma}\B_{\lceil kt\rceil}}]\to \expec[\e^{-\frac{1}{2}sp_c\int_0^tZ_\alpha(u)\dif u}]$,
which directly implies the result.\\

\noindent\textbf{Part 3: Extension to $J_1$ convergence.}
\label{inst1_step3}
In the following we show convergence of the finite dimensional distribution and tightness of $\B_{\lceil kt\rceil}$. We conclude the result by showing that these properties imply $J_1$ convergence.

Fix $m\in \mathbb{N}^+$ and take $t_1<\ldots<t_m$ and $s_1,\ldots,s_m\in\mathbb{R}^+$.
Consider then the joint Laplace transform
\begin{equation}
    \expec_k\Big[
    \e^{-k^{-\gamma}\sum_{i=1}^m s_i\B_{\lceil kt_i\rceil}}
    \Big].
\end{equation}
As $\B_k$ is a sum of random variables (c.f. \eqref{eq:def_texttt_B_k}), it can be deconstructed, by Corollary \ref{cor:indep_B_k} in mutually independent parts
\begin{equation}
     \expec_k\Big[
    \e^{-k^{-\gamma}
    (s_1+\ldots+s_m)\sum_{i=0}^{\lceil kt_1\rceil}
    B_i+\ldots+
    k^{-\gamma} s_m \sum_{i=\lceil kt_{m-1}\rceil+1}^{\lceil kt_m\rceil} B_i
    }\Big]
    =
    \prod_{i=1}^m
    \expec_k\Big[
    \e^{-k^{-\gamma}
    \sum_{j=i}^{m} s_j\sum_{l=\lceil kt_{i-1}\rceil+1}^{\lceil kt_i\rceil}
    B_l
    }\Big],
\end{equation}
under the convention that $kt_0+1=0$. Using a similar trick to \eqref{eq:transform_Bk_firstregime_1}-\eqref{eq:transform_Bk_firstregime_2}, this can be rewritten to
\begin{equation}
    \prod_{i=1}^m
    \expec_k\Big[
    \e^{-k^{-\gamma}
    \sum_{j=i}^m s_j\sum_{l=\lceil kt_{i-1}\rceil+1}^{\lceil kt_i\rceil}
    \sum_{e\in E(\H_k)}
    w(e)
    }\Big]
    =
    \prod_{i=1}^m
    \expec_k\Big[
    \e^{-k^{-\gamma}
    \sum_{j=i}^m s_j\sum_{l=\lceil kt_{i-1}\rceil+1}^{\lceil kt_i\rceil}
    H_i w_i/2
    }\Big]
\end{equation}
Then by the result of Part 1, these transforms individually converge and we find, 
\begin{equation}
\label{eq:transform_Bk_firstregime_fin}
\begin{aligned}
    \prod_{i=1}^m
    \expec_k\Big[
    \e^{-k^{-\gamma}
    \sum_{j=i}^m s_j\sum_{l=\lceil kt_{i-1}\rceil+1}^{\lceil kt_i\rceil}
    H_i w_i/2
    }\Big] &\to
    \prod_{i=1}^m
    \expec_k\Big[
    \e^{-p_c/2
    \sum_{j=i}^m s_j\int_{t_{i-1}}^{t_i}
    Z_\alpha(u)\dif u 
    }\Big]
    \\&=
    \expec_k\Big[
    \e^{-p_c/2(s_1\int_0^{t_1} Z_\alpha(u)\dif u+\ldots + s_m\int_{t_{m-1}}^{t_m} Z_\alpha(u)\dif u)}
    \Big].
\end{aligned}
\end{equation}
Finally, one can apply a similar strategy as in Part 2, that extend this result to the unconditional finite-dimensional distribution.

We show tightness next. The main idea is based on the observation
\begin{equation}
\label{eq:stochdomB}
    k^{-\gamma}\B_{\lceil kt\rceil}
    =
    k^{-\gamma}\sum_{i=1}^{\lceil kt\rceil}
    B_i
    \preceq 
     k^{-\gamma}\sum_{i=1}^{\lceil kt\rceil}
    H_i
    =k^{-\gamma}M_{\lceil kt\rceil}.
\end{equation}
As $k^{-\gamma}M_{\lceil kt\rceil}$ is tight by \cite[Appendix B2]{Gundlach2022}, this implies tightness of $ k^{-\gamma}\B_{\lceil kt\rceil}$ in a straightforward manner. For the details we refer to Appendix \ref{app:tightness}.
Finally, based on \cite[Theorem 13.1]{Billingsley1999}, convergence of the finite dimensional distribution, as shown in \eqref{eq:transform_Bk_firstregime_fin}, and tightess, as shown in Appendix \ref{app:tightness}, implies $J_1$ convergence.
\end{proof}

\section{Weight scaling in the second instance}
\label{sec:secondinstance}
In this section we assume $\alpha\in(1/2,1)$. We present a key result on the $(W_k)_{k\geq 0}$ process that is relevant in the upcoming scaling results:

\begin{lemma}[Scaling of the ratio of $W_k$]
\label{lem:ratio_Wk}
    Fix the i.i.d. sequence $(P_i)_{i\geq1}$, where 
    \begin{equation}
        P_1 = 
        \begin{cases}
            1 & \text{w.p. } \alpha,\\
            \Unif[0,1]^{(1-\alpha)/\alpha}
            & \text{w.p. } 1-\alpha.
        \end{cases}
    \end{equation}
    Then, for fixed $\ell>0$ and $k\to\infty$,
    \begin{equation}
        \bigg(
        \frac{W_k}{W_{k-i}}
        \bigg)_{i=1}^{\ell}
        \xrightarrow{d}
        \bigg(
        \prod_{j=1}^i P_j
        \bigg)_{i=1}^\ell.
    \end{equation}
\end{lemma}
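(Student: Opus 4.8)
The plan is to exploit the time-homogeneous Markov structure of $(W_k)_{k\ge0}$ in \eqref{eq:Wk_Mc} and reduce the statement to the asymptotics of a single transition. Writing the consecutive one-step ratios $R_m:=W_m/W_{m-1}\in[0,1]$, we have $W_k/W_{k-i}=\prod_{j=0}^{i-1}R_{k-j}$, so by the continuous mapping theorem it suffices to prove the joint convergence
\begin{equation}
(R_k,R_{k-1},\dots,R_{k-\ell+1})\xrightarrow{d}(P_1,P_2,\dots,P_\ell),
\end{equation}
i.e. that the last $\ell$ one-step ratios become independent, each with the law of $P_1$. The bookkeeping matches: identifying $R_{k-j}\leftrightarrow P_{j+1}$ turns $\prod_{j=0}^{i-1}R_{k-j}$ into $\prod_{j=1}^iP_j$.

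First I would identify the single-step ratio kernel. Conditioning on $W_{k-1}=w$ and setting $c(w):=\expec[X(1-w\theta(w))^{X-1}]$, equation \eqref{eq:Wk_Mc} gives, for the law $\kappa_w$ of $R=W_k/w$,
\begin{equation}
\kappa_w(\{1\})=1-c(w)\frac{\theta(w)}{\theta'(w)},\qquad \kappa_w([0,r))=c(w)\frac{\theta(rw)}{\theta'(w)}\quad(r\in[0,1)).
\end{equation}
Since $\alpha\in(1/2,1)$ we have $p_c=0$ and $W_k\to0$, so I evaluate these as $w\to0$. With $\rho:=\alpha/(1-\alpha)$, Lemma \ref{lem:scaling_theta} gives $\theta(w)\sim w^\rho$, and because $\theta$ is monotone and regularly varying the monotone density theorem yields $\theta'(w)\sim\rho w^{\rho-1}$. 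For $c(w)$ I would use the self-consistency \eqref{eq:eta(p)}: setting $h(t):=1-\expec[(1-t)^X]$ and $q=w\theta(w)$, one has $h(q)=\theta(w)$ and $c(w)=h'(q)$; substituting $\theta(w)\sim w^\rho$ into $t=w\theta(w)\sim w^{1/(1-\alpha)}$ gives $h(t)\sim t^\alpha$, hence $h'(t)\sim\alpha t^{\alpha-1}$ and $c(w)\sim\alpha\,q^{\alpha-1}\sim\alpha/w$. Plugging in,
\begin{equation}
\kappa_w(\{1\})\to1-\frac{\alpha}{\rho}=\alpha,\qquad \kappa_w([0,r))\to\frac{\alpha}{\rho}r^\rho=(1-\alpha)r^\rho,
\end{equation}
which is exactly the law of $P_1$ (atom $\alpha$ at $1$, and CDF $(1-\alpha)r^\rho$ on $[0,1)$, i.e. the $\Unif[0,1]^{(1-\alpha)/\alpha}$ part). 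This establishes $\kappa_w\Rightarrow\mathrm{Law}(P_1)$ as $w\to0$.

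Next I would upgrade the single-step convergence to joint asymptotic independence by induction on $\ell$. By time-homogeneity, $\Phi_\ell(w):=\expec[\prod_{j=1}^\ell f_j(R_j)\mid W_0=w]$ (for bounded continuous $f_j$) does not depend on $k$, and conditioning on the first step gives $\Phi_\ell(w)=\int f_1(r)\,\tilde\Phi_{\ell-1}(rw)\,\kappa_w(dr)$, where $\tilde\Phi_{\ell-1}$ uses $f_2,\dots,f_\ell$. The base case $\ell=1$ is the kernel convergence above. For the inductive step, the hypothesis $\tilde\Phi_{\ell-1}(w)\to c:=\prod_{j\ge2}\expec[f_j(P_1)]$ as $w\to0$ gives $\sup_{0<u\le w}|\tilde\Phi_{\ell-1}(u)-c|\to0$; since $rw\le w$ for $r\in(0,1]$, this controls $|\Phi_\ell(w)-c\int f_1\,d\kappa_w|$, and the base case finishes the step. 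Finally, $\expec[\prod_j f_j(R_{k-\ell+j})]=\expec[\Phi_\ell(W_{k-\ell})]$, and since $W_{k-\ell}\to0$ a.s. and $\Phi_\ell$ is bounded with limit $\prod_j\expec[f_j(P_1)]$ at $0$, dominated convergence yields the unconditional joint convergence, completing the proof.

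The main obstacle is the single-step computation, specifically pinning down the constant in $c(w)\sim\alpha/w$. The clean cancellations producing the atom $\alpha$ and the power-law CDF $(1-\alpha)r^\rho$ hinge on (i) the exact leading constants in $\theta(w)\sim w^\rho$ and $\theta'(w)\sim\rho w^{\rho-1}$ (the latter requiring a monotone-density/regular-variation argument, since Lemma \ref{lem:scaling_theta} provides only $\theta$, not $\theta'$), and (ii) extracting $c(w)\sim\alpha/w$ from the self-consistency relation \eqref{eq:eta(p)} rather than from $\expec[X]$, which is infinite in this regime. Everything downstream — the products and the asymptotic independence — is comparatively routine once this asymptotic is secured.
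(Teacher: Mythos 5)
Your argument is sound, but there is nothing in the paper to compare it against: the paper proves Lemma \ref{lem:ratio_Wk} by citation to \cite[Lemma 3.15]{Gundlach2022} and gives no argument of its own. What you have done is reconstruct a self-contained proof from ingredients that \emph{are} in the paper --- the Markov kernel \eqref{eq:Wk_Mc}, the fixed-point equation \eqref{eq:eta(p)}, and Lemma \ref{lem:scaling_theta} --- and your single-step computation is corroborated by the worked example of Section \ref{sec:worked out example}, where for the generating function \eqref{eq:special_case_transform} the kernel is exact for every $k$ and gives precisely the law of $P_1$; your claim is that this holds asymptotically in general, which is the content of the lemma. Two refinements are worth recording. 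First, the leading constant in $\theta(w)\sim w^{\alpha/(1-\alpha)}$ (in general $(c_{\sss X}\Gamma(1-\alpha))^{1/(1-\alpha)}$ rather than $1$) cancels from both $\kappa_w(\{1\})$ and $\kappa_w([0,r))$: it appears homogeneously in the ratio $\theta(rw)/\theta'(w)$ and drops out of $c(w)\sim\alpha/w$ through the self-consistency, so the ``exact leading constants'' you worry about are not actually needed --- only the regular variation index is. Second, you can avoid the monotone-density step for $\theta'$ (which requires eventual monotonicity of $\theta'$ near $0$) altogether by differentiating the fixed point $\theta(w)=h(w\theta(w))$ with $h(t)=1-\expec[(1-t)^X]$: this yields the exact identity $c(w)\theta(w)/\theta'(w)=1-wc(w)$, so that $\kappa_w(\{1\})=wc(w)\to\alpha$ and $\kappa_w([0,r))=(1-wc(w))\,\theta(rw)/\theta(w)\to(1-\alpha)r^{\alpha/(1-\alpha)}$, using only $wc(w)\to\alpha$ and $\theta(rw)/\theta(w)\to r^{\alpha/(1-\alpha)}$ from Lemma \ref{lem:scaling_theta}. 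Your induction for asymptotic independence of the last $\ell$ one-step ratios --- time-homogeneity, the uniform-on-$(0,w]$ restatement of the inductive hypothesis, and $W_{k-\ell}\xrightarrow{a.s.}0$ --- is correct and complete as written.
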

\begin{proof}
    \cite[Lemma 3.15]{Gundlach2022}.
\end{proof}
This implies that, asymptotically, one can observe $W_k$ as an i.i.d. product of random variables $P_i$.
Next, we provide the proof of Theorem \ref{thm:main} for the second instance. We use a similar strategy as posed in the first instance, that we briefly outline below.

\begin{enumerate}
\item \hyperref[inst2_step1]{Part 1: Convergence of the one-dimensional conditional transform.}
We again show convergence via the Laplace transform. Assume that $(w_k)_{k\geq 0}$ is a non-increasing process between $[0,1]$  that converges to $p_c$ and additionally
that $w_k/w_{k-\ell}\to p(\ell)\in(0,1)$ for any $\ell$.
Then we prove the first instance conditioned on $(W_k)_{k\geq 0} = (w_k)_{k\geq 0}$.
\item \hyperref[inst2_step2]{Part 2: Extension to the one-dimensional unconditional transform.}
Based on Lemma \ref{lem:ratio_Wk}, $(W_k)_{k\geq 0}$ satisfies the condition set on $(w_k)_{k\geq 0}$.
We can therefore extend the functional convergence of the transform in terms of $(w_k)_{k\geq0}$ based on the continuous function theorem.

\item \hyperref[inst2_step3]{Part 3: Extension to joint convergence.}
We finally extend Part 2 to convergence of the finite dimensional distribution. Due to the specific nature of the scaling of $\B_k$, additionally showing a process limit (for example for $\B_{\lceil kt\rceil}$) is less insightful. Indeed, it can be verified, see for example Remark \ref{rem:J_1_second_inst}, that the only candidate limiting process is independent of $t$.
\end{enumerate}

\begin{proof}[Proof of Theorem \ref{thm:main}, second instance]
We provide details to the three parts as described as above in order to prove Theorem \ref{thm:main} for the second instance. In the following, assume $\alpha\in(1/2,1)$ and that
$W_{k-\ell}^{\alpha/(1-\alpha)}H_{k-\ell}\to Z_\alpha(\ell)$, an i.i.d. copy of $Z_\alpha$.\\

\noindent\textbf{Part 1: Convergence of the one-dimensional conditional transform.}
\label{inst2_step1}
   Fix $\ell\in\mathbb{N}$. 
   Recall $B_k$ from \eqref{eq:def_B_k} and the notation of \eqref{eq:prob_k}, then assume $(w_k)_{k\geq 0}$ a non-increasing sequence such that, for $k\to\infty$, $w_k\to 0$, and $w_k/w_{k-\ell}\to p(\ell)\in(0,1)$ for any $\ell$. Then we start with the conditional transform. Based on a similar approach to \eqref{eq:transform_Bk_firstregime_1}- \eqref{eq:transform_Bk_firstregime_2}, we find for $s>0$
   \begin{equation}
\label{eq:transform_Bk_secregime_1}
   \begin{aligned}
       \expec_k\left[\e^{-sw_k^{\alpha/(1-\alpha)-1}\B_{k-\ell} }\right]
      & = \expec_k\left[ 
       \e^{-sw_k^{\alpha/(1-\alpha)-1}
       \sum_{i=0}^{k-\ell}
       \sum_{j=1}^{H_{i}}
       \Unif_j[0,w_i]
       (1+o(1))
       }
       \right]\\
        & = \expec_k\left[ 
       \e^{-\frac{1}{2}sw_k^{\alpha/(1-\alpha)-1}
       \sum_{i=\ell}^{k} w_{k-i}
        H_{k-i}
       (1+o(1))
       }
       \right]
       \\&=\expec_k\bigg[ 
       \e^{-\frac{1}{2}s\sum_{i=\ell}^k\frac{w_k^{(2-\alpha)/(1-\alpha)}}{w_{k-i}^{(2\alpha-1)/(1-\alpha)}}
       w_{k-i}^{\alpha/(1-\alpha)}H_{k-i}(1+o(1))
       }
       \bigg].
    \end{aligned}
   \end{equation}
Based on the conditions set on $(w_k)_{k\geq 0 }$, we find for $k\to\infty$
\begin{equation}
\label{eq:conv_weight_reg_2_1}
    \expec_k\bigg[ 
       \e^{-\frac{1}{2} s\sum_{i=\ell}^k\frac{w_k^{(2-\alpha)/(1-\alpha)}}{w_{k-i}^{(2-\alpha)/(1-\alpha)}}
       w_{k-i}^{\alpha/(1-\alpha)}H_{k-i}(1+o(1))
       }
       \bigg]
    \to   \expec_k\left[ 
       \e^{-\frac{1}{2}s\sum_{i=\ell}^\infty
       \prod_{j=1}^i p(j)^{(2\alpha-1)/(1-\alpha)}Z_\alpha(i)}
       \right].
\end{equation}
\noindent\textbf{Part 2: Extension to the one-dimensional unconditional transform.}
\label{inst2_step2}
As we find by Lemma \ref{lem:ratio_Wk}, we find that $(W_k)_{k\geq 0}$ is with probability 1 in the class of sequences $(w_k)_{k\geq 0}$ that we assumed for \eqref{eq:conv_weight_reg_2_1}. Therefore, by the continuous function theorem, we also find for $(P_i)_{i\geq 0}$ as defined in 
Lemma \ref{lem:ratio_Wk},
\begin{equation}
\label{eq:conv_weight_reg_2_2}
   \expec\Big[ 
       \e^{-sW_k^{(2\alpha-1)/(1-\alpha)} \B_{k-\ell}
       }\ \Big | \ (W_i)_{i=0}^k
       \Big]
    \xrightarrow{d}   \expec\Big[ 
       \e^{-\frac{1}{2}s
       \sum_{i=\ell}^\infty
       \prod_{j=1}^i P_j^{(2\alpha-1)/(1-\alpha)}Z_\alpha(i)}
       \ \Big | \ (W_i)_{i=0}^k
       \Big].
\end{equation}
   Since both expectations are uniformly bounded by 1, convergence of the unconditional Laplace transform follows from the Portmanteau theorem and we conclude 
\begin{equation}
\label{eq:conv_weight_reg_2_3}
   \expec\Big[ 
       \e^{-sW_k^{(2\alpha-1)/(1-\alpha)} \B_{k-\ell}
       }
       \Big]
    \xrightarrow{d}   \expec\Big[ 
       \e^{-\frac{1}{2}s
       \sum_{i=\ell}^\infty
       \prod_{j=1}^i P_j^{(2\alpha-1)/(1-\alpha)}Z_\alpha(i)}
       \Big].
\end{equation}

\noindent\textbf{Part 3: Extension to joint convergence.} 
\label{inst2_step3}
Fix $m>0$, integers $0<\ell_1<\ldots <\ell_m$ and $s_1,\ldots s_m\in\mathbb{R}^+$. We then find for the joint transform by rearranging terms that
\begin{equation}
\label{eq:jointconv_inst2_1}
\begin{aligned}
    &\expec_k\Big[ 
    \e^{-w_{k}^{\alpha/(1-\alpha)-1}(
    s_1 \sum^{k-\ell_1}_{i=0}B_i+\ldots +
    s_m \sum^{k-\ell_m}_{i=0}B_i
    )
    }
    \Big]
    \\&\qquad=
     \expec_k\Big[ 
    \e^{-w_{k}^{\alpha/(1-\alpha)-1}(
    (s_1+\ldots+s_m) \sum^{k-\ell_m}_{i=0}B_i+\ldots +
    s_1
    \sum^{k-\ell_1}_{i=k-\ell_2+1}B_i)}
    \Big]
    \\&\qquad=
    \prod_{i=1}^m \expec_k\Big[ 
    \e^{-w_{k}^{\alpha/(1-\alpha)-1}
    \sum_{j=i}^m s_{m-j+1}
    \sum_{j=\ell_{m-i+1}+1}^{\ell_{m-i+2}}B_{k-j}}
    \Big],
\end{aligned}
\end{equation}
under the convention that $\ell_{m+1}=k$.
Based on \eqref{eq:conv_weight_reg_2_3} taking the limit gives
\begin{equation}
\eqref{eq:jointconv_inst2_1}
    \to 
     \prod_{i=1}^m \expec_k\Big[ 
    \e^{-\frac{1}{2}p_c\sum_{j=i}^m s_{m-j+1}
    \sum_{j=\ell_{m-i+1}+1}^{\ell_{m-i+2}}
    \prod_{u=1}^jp(u)^{ (2\alpha-1)/(1-\alpha)}Z_\alpha(j)
    }
    \Big].
\end{equation}
These terms can again be properly rearranged so that this expression simplifies to
\begin{equation}
   \expec_k \Big[ 
    \e^{-\frac{1}{2}p_cs_1
    \sum_{j=\ell_1}^{\infty}
    \prod_{u=1}^jp(u)^{ (2\alpha-1)/(1-\alpha)}Z_\alpha(j)
    +\ldots +
    s_m
    \sum_{j=\ell_m}^{\infty}
    \prod_{u=1}^jp(u)^{ (2\alpha-1)/(1-\alpha)}Z_\alpha(j)
    }
    \Big].
\end{equation}
Next, set 
\begin{equation}
\label{eq:def_tildeZ}
    \tilde Z_\alpha(\ell) = 
    \sum_{j=\ell}^{\infty}
    \prod_{u=1}^jP_j^{ (2\alpha-1)/(1-\alpha)}Z_\alpha(j),
\end{equation}
then by a similar approach as in Step 2, this result can be extended to the unconditional Laplace transform. We then find
\begin{equation}
    \expec\Big[ 
    \e^{-W_{k}^{\alpha/(1-\alpha)-1}(
    s_1\B_{k-\ell_1}+\ldots +
    s_m \B_{k-\ell_m}
    )
    }
    \Big]
    \to
    \expec\Big[ 
    \e^{-W_{k}^{\alpha/(1-\alpha)-1}(
    s_1\tilde Z_\alpha(\ell_1)+\ldots +
    s_m \tilde Z_\alpha(\ell_m) )
    }
    \Big],
\end{equation}
thereby concluding the result.
\end{proof}

\begin{remark}[Trivial $J_1$ convergence for the second instance]
\label{rem:J_1_second_inst}
Extending this result to process convergence of $(W_{k}^{\alpha/(1-\alpha)-1}\B_{\lceil kt \rceil})_{t>0}$ would result in a, less interesting, i.i.d. process. Indeed, by taking $\ell=0$ one can verify in a straightforward manner from \eqref{eq:transform_Bk_secregime_1} that for any fixed $t$ 
\begin{equation}
     W_{k}^{\alpha/(1-\alpha)-1}\B_{\lceil kt \rceil}
     \to 
     \tilde Z_\alpha(0).
\end{equation}
\end{remark}

\section{Weight scaling in the third instance}
\label{sec:thirdinstance}
We finally consider the third instance by taking $\alpha\in(0,1/2)$. Here, we find that the degrees of the vertices in the IPC are so large, that the weights discovered in the IPC over time get so small that they become summable. We first prove that $\B_k$ has bounded expectation in $k$, which implies that it is almost surely finite. Moreover, as $\B_k$ is increasing, it converges almost surely to an almost surely finite random variable. Then we illustrate for a specific example what the expected added weight would be.

\begin{proof}[Proof of Theorem \ref{thm:main}, third instance]
We show that $\B_k$ is almost surely finite by showing that $\expec[\B_k]<\infty$, so that the result follows from the Markov inequality. 
We start by taking a bound on $(W_k)_{k\geq 0}$. According to \cite[Equation (3.68)]{Gundlach2022} there exists constants $\bar a> \underline a>0$ such that $\underline a >\alpha/(1-\alpha) \bar a$ and $K>0$ such that for all $k>K$
\begin{equation}
\label{eq:boundsW_k_reg3}
   \e^{-\bar{a}k}< W_k < \e^{-\underline a k},
\end{equation}
with high probability. 
We write $\hat D_{v_k}\equiv \Binom(D_{v_k},W_k)$ for the degree of $v_k$ restricted to vertices in the IPC 
and $T_i^{W_k}$ as the $i$-th conditionally finite tree in the conditionally finite forest attached to the $k$-th backbone vertex. Then, we find for the total cumulative weights attached to the $k$-the backbone vertex
\begin{equation}
\label{eq:third_rigime_eq0}
\begin{aligned}
    \expec_k[B_{k}\mid W_k]&=
    \expec_k\Big[
    \beta_k + \sum^{\hat D_{W_k}}_{i=1}\sum^{|T_i^{W_k}|+1}_{j=1} \Unif[0,W_k]\
    \Big|\ W_k\Big].
\end{aligned}
\end{equation}
Conditional on the future maximum weight $W_k$, the backbone degrees $\hat D_{W_k}$, finite tree sizes $|T_i^{(W_k)}|$, and the weights of edges inside the finite trees are independent by Lemma \ref{lem:scaling_B_k} and Corollary \ref{cor:indep_B_k}. It can therefore be written as 
\begin{equation}
\begin{aligned}
\label{eq:expec_B_k}
    \expec_k[B_{k}\mid W_k]
    = 
    \expec[\beta_k\mid W_k]+
    \frac{1}{2} \expec_k[\hat D_{W_k}\mid W_k ]\expec_k[|T_i^{W_k}|+1\mid W_k]
     W_k,
\end{aligned}
\end{equation}
where we recall that $\beta_k$ is the weight of the $k$-th edge on the backbone, and $T_i^{W_k}$ a tree that is restricted to weights below $W_k$ and is conditionally finite.
In the following we derive the three unknown expectations explicitly.\\

\noindent\textbf{Expected weight on the $k$-th backbone edge.}
Based on \eqref{eq:Wk_Mc}, $\beta_k$ is either equal to $W_k$, i.e., the next future maximal weight is added, or it is smaller than $W_k$, in which case it is simply uniformly between $[0,W_k]$. Therefore, 
\begin{equation}
    \expec[\beta_k\mid W_k ]
   \leq 
  W_k.
\end{equation}

\noindent\textbf{Expected weights on the $k$-the backbone vertex.}
We use our observation from \eqref{eq:prereq_backbonedegree} to find
\begin{equation}
\label{eq:bbweights_reg3}
    \expec[\hat D_{W_k}\mid W_k]=\frac{1}{2}W_k\expec[D_{v_k}]
=\frac{1}{2}W_k\frac{\expec[X^2(1-W_k\theta(W_k))^{X-1}\mid W_k]}{\expec[X(1-W_k\theta(W_k))^{X-1}\mid W_k]}.
\end{equation}
Next, based on the Karamata-Tauberian theorem \cite[Theorem 8.1.6]{Bingham1987}, we find the scaling for the expectation, for $k\to\infty$,
\begin{equation}
\begin{aligned}
    \expec[X(1-W_k\theta(W_k))^{X-1}\mid W_k]
    &=
    c_{\sss X}\alpha\Gamma(1-\alpha) W_k \theta(W_k)^{-(1-\alpha)}(1+o_\prob(1)),
\end{aligned}
\end{equation}
with high probability and where the $o_\prob(1)$ is uniformly bounded.
Next, by the monotone density theorem \cite[Theorem 1.2.9]{Mikosch1999}, for $k\to\infty$,
\begin{equation}
\begin{aligned}
    \expec[X^2(1-W_k\theta(W_k))^{X-2}\mid W_k]
    &= c_{\sss X}\alpha\Gamma(1-\alpha)(1-\alpha) W_k \theta(W_k)^{-(2-\alpha)}(1+o_\prob(1))
\end{aligned}
\end{equation}
with high probability  and where the $o_\prob(1)$ is uniformly bounded.
This implies that for $k\to\infty$ and for bounded error term $o_\prob(1)$,
\begin{equation}
\label{eq:third_rigime_eq1}
    \expec[\hat D_{W_k}\mid W_k]
    =
    \frac{1}{2}W_k
    (1-\alpha)(W_k\theta(W_k))^{-1}
    (1+o_\prob(1)).
\end{equation}
\noindent\textbf{Expected cumulative weight in the attached trees.}
We next investigate $\expec[|T_i^{W_k}|+1 \mid W_k]$,
where we recall that the offspring distribution of $T_i^{w_k}$ is given by $\tilde X_k$ and based on \eqref{eq:prereq_tildeX}, we find for $k\to\infty$ and with bounded error term $o_\prob(1)$,
\begin{equation}
\label{eq:offsring_tildeX_reg3}
    \expec[\tilde X_k\mid W_k]= 
    W_k\expec[X(1-W_k\theta(W_k))^{X-1} \mid W_k] =
    \alpha(1+o_\prob(1)).
\end{equation}
As the expected total progeny for an i.i.d. branching process with mean $\mu<1$ is known and equal to $1/(1-\mu)$, this implies then
\begin{equation}
    \label{eq:third_rigime_eq2}
    \expec[|T_i^{(W_k)}|+1 \mid W_k] =1+\frac{1}{1-\alpha}(1+o_\prob(1))
    =\frac{2-\alpha}{1-\alpha}(1+o_\prob(1)).
\end{equation}
\noindent\textbf{Finalising the expectation of $B_k$.}
Combining the above results we find with high probability for $k\to\infty$ with bounded error term $o_\prob(1)$,
\begin{equation}
    \expec[B_k\mid W_k]
    \leq 
   \Big( W_k+\frac{2-\alpha}{4} W_k\theta(W_k)^{-1} \Big)(1+o_\prob(1)).
\end{equation}
Based on Lemma \ref{lem:scaling_theta} we find that for $k>K$ that there exists a $c>0$ such that, 
\begin{equation}
     \expec[B_k\mid W_k]
    \leq 
   \Big( \e^{-\underline ak}+\frac{2-\alpha}{4} \e^{-k(\underline a - \alpha/(1-\alpha)\bar a)}\Big)(1+o_\prob(1))
   \leq \e^{-c k}(1+o_\prob(1)),
\end{equation}
where the second inequality follows from the specific choices of $\underline a$ and $ \bar a $ as chosen in \eqref{eq:boundsW_k_reg3}. Moreover, as $W_k<1$ with probability 1, the $o_\prob(1)$ term is uniformly bounded, such that there exists a $C$ such that, for $k<K$,
\begin{equation}
    \expec[B_k] < C\e^{-ck}.
\end{equation}
This implies that $\expec[B_k]$ has an exponentially converging majorant and as $B_k$ is almost surely finite, we find
\begin{equation}
   \lim_{k\to\infty} \expec[\B_k]
    =\lim_{k\to\infty}\sum_{i=1}^k \expec[B_i]
    \leq \lim_{k\to\infty}\sum_{i=1}^k C\e^{-ci} < \infty
\end{equation}

\end{proof}

\subsection{Worked-out example}
\label{sec:worked out example}
We close the paper with a worked-out example. By chosing a specific generating function for the offspring distribution of $\T$,  we can explicitly calculate the cumulative weights of the $k$-cut IPC for $k\to\infty$. In this section we assume that, for $\alpha\in(0,1/2)$,
\begin{equation}
\label{eq:special_case_transform}
    \expec[s^X] = 1-(1-s)^\alpha.
\end{equation}
It is straightforward to verify that \eqref{eq:special_case_transform} is a generating function corresponding to a distribution that satisfies \eqref{eq:heavy_tail} with exponent $\alpha\in(0,1/2)$. For given $p\in(0,1)$, $\theta(p)$ can be expressed explicitly by \eqref{eq:eta(p)} as
\begin{equation}
\label{eq:theta_special_case}
    1-\theta(p)=
    \expec\big[(1-p\theta(p))^X\big]
    =1-(p\theta(p))^\alpha,
    \text{ so that }\
    \theta(p) = p^{\alpha/(1-\alpha)}.
\end{equation}
Furthermore, the Markov chain representation of $(W_k)_{k\geq 0}$ can be made explicit. By combining \eqref{eq:Wk_Mc} and \eqref{eq:special_case_transform} we find 
\begin{equation}
\begin{aligned}
    \prob(W_{k+1}=W_k\mid W_k ) &= 1-\alpha(p\theta
    (W_k))^{\alpha-1}\frac{\theta(W_k)}{\theta'(W_k)}
    =
    \alpha,\\ 
     \prob(W_{k+1}<u\mid W_k) &=\alpha(p\theta
    (W_k))^{\alpha-1}\frac{\theta(u)}{\theta'(W_k)}
    =(1-\alpha)\left(\frac{u}{W_k}\right)^{\alpha/(1-\alpha)}.
\end{aligned}
\end{equation}
This specifically implies that $W_k$ can be written as a product of i.i.d. random variables. Let $(P_i)_{i\geq1}$ and i.i.d. sequence such that $P_i$ equals $1$ with probability $\alpha$ and $\Unif[0,1]^{(1-\alpha)/\alpha}$ with probability $1-\alpha$, then
\begin{equation}
\label{eq:example_wk_product}
    W_k = W_0\prod_{i=1}^k 
    P_i,
\end{equation}
where we note that $\prob(W_0<p)=\theta(p)=p^{\alpha/(
1-\alpha)}$. It follows that many key components in the analysis of $\expec[\B_k]$ are explicit and indeed, we find the following explicit result:

\begin{example}[Expected cumulative weights]
\label{ex:expec cumulative weights}
    Consider offspring distribution $X$ with a generating function as given in \eqref{eq:special_case_transform} and let $\B_k$ as defined in \eqref{eq:def_B_k}. Then, for $\alpha\in(0,1/2)$,
    \begin{equation}
     \lim_{k\to\infty}  \expec[\B_k]=\frac{\alpha(2-5\alpha^2)}{4(1-\alpha)^2(1-2\alpha)}.
    \end{equation}
\end{example}

\section*{Acknowledgements}
The author thanks Remco van der Hofstad for his helpful comments and feedback. 
This work is supported by the Netherlands Organisation for Scientific Research (NWO) through Gravitation-grant NETWORKS-024.002.003.

\begin{appendices}

\section{Proof for tightness
}
\label{app:tightness}
We use the Arzel\'a-Ascoli characterisation for tightness \cite[Theorem 13.2]{Billingsley1999}. This states that tightness is equivalent to the following two conditions (see also \cite[Appendix B]{Gundlach2022}:
\begin{description}
\item[Tightness of the supremum.] For every $\eta>0$, there exists an $a>0$ such that 
\begin{equation}
     \underset{k\to\infty}{\emph{limsup}}
    \prob(\sup_{t\in [\vep,R]} |X_k(t)| >a)<
    \eta.
\end{equation}
		
\item[Modulus of continuity.] For every $r>0$ and $\eta>0$, there exists a $\delta>0$ such that
\begin{equation}
\label{eq:modulus o c}
    \underset{k\to \infty}{\emph{limsup}}
    \prob(\bar{w}_{X_k}(\delta)>r)<\eta,
\end{equation}
where, for a process $x=(x(t))_{t\in [\vep,R]}$, $\bar{w}_x(\delta)$ is the c\`adl\`ag modulus of $x$ given by
\begin{equation}
\inf
 \limits_{(t_i)_{i\geq1}\in P_\delta}\
\max
 \limits_{i\in[1,j]}\
\sup
  \limits_{s,t\in[t_i,t_{i+1}]}
|x(t)-x(s)|,
\end{equation}
and $P_\delta$ is the set of all partitions of the interval $[\varepsilon,R]$, where the increment lengths are at most $\delta$. 
\end{description}

In the following, we use that $k^{-\gamma}M_{\lceil kt\rceil}$ is tight. Combining this with the observation in \eqref{eq:stochdomB} show the condition for tightness of the supremum follows directly, for given $\eta$ there exists an $a$ such that
\begin{equation}
\underset{k\to\infty}{\emph{limsup}}
		\prob(\sup_{t\in [\vep,R]} |\B_{\lceil kt\rceil}(t)| >a)\leq 
 \underset{k\to\infty}{\emph{limsup}}
		\prob(\sup_{t\in [\vep,R]} |M_{\lceil kt\rceil}(t)| >a)<
		\eta.
\end{equation}
Next, for the condition for the modulus of continuity, we observe again that
\begin{equation}
\label{eq:modulus o c: lem}
    |\B_{\lceil kt_2\rceil}(t)
    -\B_{\lceil kt_1\rceil}(t)
    |
    =
    \sum_{i=\lceil kt_1\rceil+1}^{\lceil kt_2\rceil} \sum_{e\in E(\H_i)} w(e)
    \preceq
    \sum_{i=\lceil kt_1\rceil}^{\lceil kt_2\rceil}H_i =
    |M_{\lceil kt_2\rceil}(t)
    -M_{\lceil kt_1\rceil}(t)
    |.
\end{equation}
By tightness of $(k^{-\gamma}M_{\lceil kt\rceil})_{t>0}$,  there exists a suitable partition $P_\delta$ such that the \eqref{eq:modulus o c} is satisfied for $(k^{-\gamma}M_{\lceil kt\rceil})_{t>0}$.  The same partition can be used in combination with \eqref{eq:modulus o c: lem} to show that $P_\delta$ is also a suitable partition to show \eqref{eq:modulus o c} for $(k^{-\gamma}\B_{\lceil kt\rceil})_{t>0}$.

\section{Details on the worked-out example
}
We next provide the details of Example \ref{ex:expec cumulative weights}
    To derive $\expec[\B_k]$ we start with deriving $\expec[B_k\mid W_k]$ that we deconstruct in \eqref{eq:expec_B_k}. 
    Then, we extend the result to the unconditional expectation.
    
    \paragraph{The conditional expectation of the factors in \eqref{eq:expec_B_k}.}
    Firstly, based on the notion in \eqref{eq:example_wk_product} the expected weights on the backbone are easily derived by
    \begin{equation}
    \label{eq:worked_example_1}
    \begin{aligned}
         \expec[\beta_{k+1}\mid W_k] &= \alpha W_k + (1-\alpha)\expec[\Unif[0,W_k]]
         =
         \frac{1}{2}
         ( 1+\alpha)
        W_k.
    \end{aligned}
    \end{equation}
 For the degrees of the backbone vertices we find based on \eqref{eq:bbweights_reg3}
 \begin{equation}
    \expec[\hat D_{W_k}\mid W_k]
    =\frac{1}{2}W_k\frac{\expec[X^2(1-W_k\theta(W_k))^{X-1}\mid W_k]}{\expec[X(1-W_k\theta(W_k))^{X-1}\mid W_k]}=
    \frac{ \alpha(1-\alpha)(W_k\theta(W_k))^{\alpha-2}(1-W_k\theta(W_k))}{2\alpha(W_k\theta(W_k))^{\alpha-1}}W_k
 \end{equation}
 Based on \eqref{eq:theta_special_case},
 we can simply this to
 \begin{equation}
 \label{eq:worked_example_2}
     \expec[\hat D_{W_k}\mid W_k]
    =\frac{1}{2}(1-\alpha)(1-W_k^{1/(1-\alpha)})
    W_k^{-\alpha/(1-\alpha)}.
 \end{equation}
 For the sizes of the attached forests, we recall that $\tilde X_k$ denotes the offspring distribution of a tree that is restricted to weights below $W_k$ that is conditionally finite. By
 \eqref{eq:offsring_tildeX_reg3} and \eqref{eq:theta_special_case},
 \begin{equation}
     \expec[\tilde X_k\mid W_k]=
     W_k\expec[X(1-W_k\theta(W_k))^{X-1}\mid W_k]= W_k\alpha(W_k\theta(W_k))^{\alpha-1}=\alpha. 
 \end{equation}
  Therefore, we can also conclude that 
  \begin{equation}
  \label{eq:worked_example_3}
      \expec\Big[|T_1^{W_k}|+1\ \Big| \ W_k\Big]
      =\frac{1}{1-\alpha}+1 = \frac{2-\alpha}{1-\alpha}.
  \end{equation}
 Substituting these \eqref{eq:worked_example_1}, \eqref{eq:worked_example_2} and \eqref{eq:worked_example_3} in 
 \eqref{eq:expec_B_k} gives
  \begin{equation}
  \label{eq:worked_example_4}
  \begin{aligned}
      \expec[B_k\mid W_k]
     & =
      \frac{1}{2}{(1+\alpha)W_k}
      +
      \frac{1}{4}(2-\alpha)(1-W_k^{\alpha/(1-\alpha)}) W_k^{(1-2\alpha)/(1-\alpha)}\\
      &=
       \frac{1}{2}{(1+\alpha)W_k}
      +
      \frac{1}{4}(2-\alpha)(1-W_k^{\alpha/(1-\alpha)}) W_k^{(1-2\alpha)/(1-\alpha)}
      .
    \end{aligned}
  \end{equation}
 Therefore by \eqref{eq:def_texttt_B_k} and by rearranging and simplifying the terms in \eqref{eq:worked_example_4},
 \begin{equation}
 \label{eq:worked_example_5}
     \expec[\B_k\mid W_k]=
     \sum_{i=0}^k
     \frac{3}{4}\alpha W_i
    +\frac{1}{4}(2-\alpha) W_i^{(1-2\alpha)/(1-\alpha)}.
 \end{equation}
 \paragraph{Extension to the unconditional expectation.}
 In the final step of this example we take the expectation of the expression in \eqref{eq:worked_example_5}. In order to do so we first derive the expectation of $W_i^{(1-2\alpha)/(1-\alpha)}$  and find
 \begin{equation}
 \begin{aligned}
     \expec[W_i^{(1-2\alpha)/(1-\alpha)}]&=
     \expec\Big[W_0^{(1-2\alpha)/(1-\alpha)} \prod_{j=1}^i (P_j)^{(1-2\alpha)/(1-\alpha)}\Big]
    \\& =\expec[W_0^{(1-2\alpha)/(1-\alpha)}] 
     \Big(
    \alpha +(1-\alpha)\expec\Big[\Unif[0,1]^{(1-2\alpha)/\alpha}\Big]
     \Big)^i.
    \end{aligned}
 \end{equation}

We note here that 
\begin{equation}
    \expec\Big[\Unif[0,1]^{(1-2\alpha)/\alpha}\Big] = 
    \int_{0}^{1}
    u^{(1-2\alpha)/\alpha}\dif u
    =
    \frac{\alpha}{1-\alpha}.
\end{equation}
and by recalling that $W_0$ has a density $\theta'(u)$ we find 
\begin{equation}
    \expec[W_0^{(1-2\alpha)/(1-\alpha)}]
    =
    \int_0^1 u^{(1-2\alpha)/(1-\alpha)}
    \frac{\alpha}{1-\alpha}u^{\alpha/(1-\alpha)-1} \dif u 
    = \frac{\alpha}{1-\alpha}.
\end{equation}
Therefore, 
\begin{equation}
    \expec[W_i^{(1-2\alpha)/(1-\alpha)}]=
    \frac{\alpha}{1-\alpha} (2\alpha)^i.
\end{equation}
By a similar approach we also derive that
\begin{equation}
    \expec[W_i]
    =
    \alpha (\alpha(2-\alpha))^i.
\end{equation}
Therefore, we conclude that by taking expectations on both side of \eqref{eq:worked_example_5}
\begin{equation}
\begin{aligned}
        \expec[\B_k] &=
   \Big( \frac{1}{4}\sum_{i=1}^k
   3\alpha^2
    (\alpha(2-\alpha))^i
    +\frac{\alpha(2-\alpha)}{1-\alpha}(2\alpha)^i\Big).
\end{aligned}
\end{equation}
Taking a limit for $k\to\infty$ then concludes
\begin{equation}
    \begin{aligned}
        \lim_{k\to\infty}
        \expec[\B_k]
            &=
     \frac{\alpha}{4}\Big(
     \frac{3\alpha}{1-\alpha(2-\alpha)}
     +
    \frac{(2-\alpha)}{1-\alpha}\frac{1}{1-2\alpha}
    \Big)
    \\&
    =\frac{\alpha}{4}
    \Big( \frac{3\alpha}{(1-\alpha)^2}+\frac{2-\alpha}{(1-2\alpha)(1-\alpha)}\Big)\\&=
    \frac{\alpha(2-5\alpha^2)}{4(1-\alpha)^2(1-2\alpha)}.
    \end{aligned}
\end{equation}
    
\end{appendices}

\bibliographystyle{abbrv}
\bibliography{Zbib}

\begin{thebibliography}{10}

\bibitem{Addario-Berry2012}
L.~Addario-Berry, S.~Griffiths, and R.~J. Kang.
\newblock {Invasion percolation on the Poisson-weighted infinite tree}.
\newblock {\em Annals of Applied Probability}, 22(3):931--970, 2012.

\bibitem{Angel2008}
O.~Angel, J.~Goodman, F.~{den Hollander}, and G.~Slade.
\newblock {Invasion percolation on regular trees}.
\newblock {\em Annals of Probability}, 36(2):420--466, 2008.

\bibitem{Barabasi1996}
A.~L. Barab{\'{a}}si.
\newblock {Invasion percolation and global optimization}.
\newblock {\em Physical Review Letters}, 76(20):3750--3753, 1996.

\bibitem{Billingsley1999}
P.~Billingsley.
\newblock {\em Convergence of Probability Measures}.
\newblock Wiley Series in Probability and Statistics. John Wiley \& Sons, Inc.,
  second edition, 1999.

\bibitem{Bingham1987}
N.~H. Bingham, C.~M. Goldie, and J.~L. Teugels.
\newblock {\em Regular Variation}.
\newblock Cambridge University Press, Cambridge, 1987.

\bibitem{Chandler1982}
R.~Chandler, J.~Koplik, K.~Lerman, and J.~F. Willemsen.
\newblock {Capillary displacement and percolation in porous media}.
\newblock {\em Journal of Fluid Mechanics}, 119:249--267, 1982.

\bibitem{Gabrielli2007}
A.~Gabrielli and G.~Caldarelli.
\newblock {Invasion percolation and critical transient in the Barab{\'{a}}si
  model of human dynamics}.
\newblock {\em Physical Review Letters}, 98(20):1--4, 2007.

\bibitem{Gabrielli2009}
A.~Gabrielli and G.~Caldarelli.
\newblock {Invasion percolation on a tree and queueing models}.
\newblock {\em Physical Review E - Statistical, Nonlinear, and Soft Matter
  Physics}, 79(4):1--7, 2009.

\bibitem{Gundlach2022}
R.~G\"undlach and R.~v.~d. Hofstad.
\newblock The invasion percolation cluster on power-law branching processes.
\newblock Preprint available at arXiv:2208.07827 [math.PR], 2023.

\bibitem{Karlin1975}
S.~Karlin and H.~Taylor.
\newblock {\em {A First Course in Stochastic Processes}}.
\newblock Elsevier Inc., second edition, 1975.

\bibitem{Laidlaw1993}
W.~G. Laidlaw, W.~G. Wilson, and D.~A. Coombe.
\newblock {A lattice model of foam flow in porous media: A percolation
  approach}.
\newblock {\em Transport in Porous Media}, 11(2):139--159, 1993.

\bibitem{Michelen2019}
M.~Michelen, R.~Pemantle, and J.~Rosenberg.
\newblock {Invasion percolation on Galton-Watson trees}.
\newblock {\em Electronic Journal of Probability}, 24(31):1--35, 2019.

\bibitem{Mikosch1999}
T.~Mikosch.
\newblock {\em Regular Variation, Subexponentiality and Their Applications in
  Probability Theory}.
\newblock EURANDOM report. Eindhoven University of Technology, 1999.

\bibitem{Norris2014}
J.~Q. Norris, D.~L. Turcotte, and J.~B. Rundle.
\newblock {Loopless nontrapping invasion-percolation model for fracking}.
\newblock {\em Physical Review E - Statistical, Nonlinear, and Soft Matter
  Physics}, 89(2):1--11, 2014.

\bibitem{Prim1957}
R.~C. Prim.
\newblock {Shortest connection networks and some generalizations}.
\newblock {\em Bell System Technical Journal}, 36(6):1389--1401, 1957.

\bibitem{Sahimi2023}
M.~Sahimi.
\newblock {\em Applications of Invasion Percolation}, pages 419--434.
\newblock Springer International Publishing, Cham, 2023.

\bibitem{Whitt2002}
W.~Whitt.
\newblock {\em Stochastic-Process Limits}.
\newblock Springer Series in Operations Research and Financial Engineering.
  Springer New York, first edition, 2002.

\bibitem{Wilkinson1983}
D.~Wilkinson and J.~F. Willemsen.
\newblock {Invasion percolation: A new form of percolation theory}.
\newblock {\em Journal of Physics A: Mathematical and General},
  16(14):3365--3376, 1983.

\end{thebibliography}

\end{document}